\pgfplotsset{width=10cm,compat=1.9}
\DeclareMathOperator{\Z}{\mathbb{Z}}
\DeclareMathOperator{\N}{\mathbb{N}}
\newcommand{\cK}{{\mathcal{K}}}
\newcommand{\ba}{\begin{align}}
\newcommand{\ea}{\end{align}}
\newcommand{\bea}{\begin{eqnarray}}
\newcommand{\eea}{\end{eqnarray}}
\newcommand{\be}{\begin{equation}}
\newcommand{\ee}{\end{equation}}
\newcommand{\eg}{{\it e.g.~}}
\newcommand{\Js}{\mathcal{J}}
\theoremstyle{plain}
\newtheorem{theorem}{Theorem} 
\newtheorem{lemma}[theorem]{Lemma}
\crefname{lemma}{Lemma}{Lemmas}
\newtheorem{proposition}[theorem]{Proposition}
\crefname{proposition}{Proposition}{Propositions}
\newtheorem{corollary}[theorem]{Corollary}
\crefname{corollary}{Corollary}{Corollaries}
\theoremstyle{plain}
\newtheorem{conj}{Conjecture}
\theoremstyle{plain}
\theoremstyle{plain}
\newtheoremstyle{ColonNotDot}%
{}
{}
{\itshape}
{}
{\bfseries}
{:}
{ }
{}
\theoremstyle{plain}
\newtheorem{definition}{Definition}
\theoremstyle{definition}
\theoremstyle{definition}
\newcommand\summaryname{Abstract}
{\small\begin{center}%
		\bfseries{\summaryname} \end{center}}
\title{On the Space of Slow Growing Weak Jacobi Forms}
\author{ Christoph A.~Keller}
\address{Christoph A. Keller,  Department of Mathematics, University of Arizona, Tucson, AZ 85721-0089, USA}
\email{cakeller@math.arizona.edu}
\author{Jason M.~ Quinones}
\address{Jason M. Quinones, School of STAMP, Gallaudet University, Washington DC 20002, USA}
\email{jason.quinones@gallaudet.edu}
\begin{document}

\begin{abstract}
    Weak Jacobi forms of weight 0 and index $m$ can be exponentially lifted to meromorphic Siegel paramodular forms. It was recently observed that the Fourier coefficients of such lifts are then either fast growing or slow growing. In this note we investigate the space of weak Jacobi forms that lead to slow growth. We provide analytic and numerical evidence for the conjecture that there are such slow growing forms for any index $m$.
\end{abstract}

\maketitle


\section{Introduction}
In this note we investigate the asymptotic behavior of the Fourier coefficients of certain types of automorphic forms, namely weak Jacobi forms \cite{MR781735} and their exponential lifts, which give meromorphic Siegel (para-)modular forms \cite{MR1616929}.
The growth of coefficients of automorphic forms has of course long been of interest in mathematics: the most famous example may be the work of Hardy and Ramanujan, who found asymptotic expressions for the number of integer partitions from the Dedekind $\eta$-function \cite{MR2280879}. However, more recently, the growth of coefficients of automorphic forms has also attracted interest in physics: it describes the entropy of certain types black holes \cite{Strominger:1996sh,Sen2011a}. This was explored from a more mathematical perspective for instance in \cite{Dabholkar:2012nd}. The main motivation for this works comes from a series of articles \cite{Belin:2019jqz, Belin:2019rba,Belin:2020nmp} that were written in this context. There it was discovered that weak Jacobi forms could either lead to fast growth or slow growth in their exponential lifts. Our goal is to investigate the space of slow growing forms.

Let us introduce the main definition of this article.
Let $\varphi$ be a  weak Jacobi form (wJf) \cite{MR781735} of weight 0 and index $m$ with Fourier expansion
\be
\varphi(\tau,z) =\sum_{n\in\Z_{\geq0},l\in \Z} c(n,l) q^n y^l\ .
\ee
Given a term $q^n y^l$ in this expansion, we call the quantity $l^2-4mn$ its \emph{polarity}, and we say that the term is \emph{polar} if its polarity is positive.
Let $q^a y^b$ be such a polar term of $\varphi$. We want to study the functions
\begin{align} \label{eq:sumsofcoeffs}
f_{a,b}(n,l):=\underset{r \in \mathbb{Z}}{\sum} c(nr+ar^2,l-br)
\end{align}
and their asymptotic growth as $n$ becomes large. Note that this sum is a finite sum because $c(n,l)=0$ if $l^2-4mn>m^2$. We will explain the motivation behind this definition below.
We make the following definition: 
\begin{definition}
Let $m\in\N$ and $a,b\in\N_0$ with $b^2-4ma >0$. We say a weak Jacobi form $\varphi$ of index $m$ and weight 0 is \emph{slow growing about $q^ay^b$} if it has no terms of polarity greater than $b^2-4ma$ and $f_{a,b}(n,l)$ is bounded as a function of $n$ and $l$. We denote the vector space of wJf that are slow growing about $q^ay^b$ by $\Js^{a,b}_m$.
\end{definition}
The purpose of this article is to describe and explore the spaces $\Js^{a,b}_m$. Our ultimate goal is to prove the following conjecture:
\begin{conj}\label{conjMain}
For every index $m$, there is at least one choice of $a,b$ such that $\dim \Js^{a,b}_m > 0$.
\end{conj}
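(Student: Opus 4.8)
I will sketch a proof of the following sharper statement, which implies Conjecture~\ref{conjMain}: for every $m\in\N$ there is a nonzero $\varphi_m\in J_{0,m}$ that is slow growing about $q^0y^m$. Since $q^0y^m$ has polarity $m^2$, and since no term of a weight-$0$ index-$m$ weak Jacobi form has polarity exceeding $m^2$ (because $c(n,l)=0$ whenever $l^2-4mn>m^2$), the only thing to verify for such a $\varphi_m$ is that $f_{0,m}(n,l)$ be bounded. The mechanism I would use to force that boundedness is holomorphy of the exponential lift: if every \emph{polar} Fourier coefficient of $\varphi$ is nonnegative, then each rational quadratic divisor enters the Borcherds product $\Phi_\varphi$ with nonnegative multiplicity, so $\Phi_\varphi$ is a \emph{holomorphic} Siegel paramodular form, whose Fourier coefficients then grow at most polynomially; polynomial growth of the coefficients of $\Phi_\varphi$ should in turn force $f_{a,b}(n,l)$ to be bounded for every admissible $(a,b)$, through the interpretation of the $f_{a,b}$ in terms of the lift (explained below in the paper). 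So it suffices to produce, for each $m$, a nonzero weight-$0$ index-$m$ form with nonnegative polar part and with $q^0y^m$ among its terms.

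For the explicit family I would take $\varphi_m:=m\,(\phi_{0,1}\,|\,V_m)$, where $\phi_{0,1}$ is the standard weight-$0$ index-$1$ generator of Eichler--Zagier and $V_m\colon J_{0,1}\to J_{0,m}$ is their Hecke operator; its Fourier coefficients are $\sum_{d\mid\gcd(n,l,m)}(m/d)\,c_{\phi_{0,1}}(nm/d^2,\,l/d)$, which are integers. The key point is that every polar coefficient of $\varphi_m$ is nonnegative: if $l^2-4mn>0$ then each nonzero summand is a coefficient $c_{\phi_{0,1}}(N',L')$ with $(L')^2-4N'=(l^2-4mn)/d^2$, a positive integer, hence equal to $1$ because $\phi_{0,1}$ has index $1$; and every polarity-$1$ coefficient of $\phi_{0,1}$ equals $c_{\phi_{0,1}}(0,1)=1>0$, since for a weak Jacobi form $c(n,l)$ depends only on $4mn-l^2$ and $l\bmod 2m$. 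Moreover the only surviving term in $\sum_{d\mid m}(m/d)\,c_{\phi_{0,1}}(0,m/d)$ is $d=m$, so $c_{\varphi_m}(0,m)=1$ and $q^0y^m$ is genuinely a polar term of top polarity $m^2$. Granting the previous paragraph, $\varphi_m\in\Js^{0,m}_m$, proving the conjecture; the operators $U_\ell$ (scaling $z\mapsto \ell z$) and $V_\ell$, which visibly preserve nonnegativity of the polar part, then produce further slow growing forms and can be used to enlarge these spaces.

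The step I expect to be the real obstacle is the one deferred above: that holomorphy of $\Phi_\varphi$ (equivalently, nonnegativity of the polar part of $\varphi$) really does force $f_{a,b}(n,l)$ to be bounded. This is delicate because, for most $r$, the term $c(nr+ar^2,\,l-br)$ appearing in $f_{a,b}(n,l)$ is a deep interior coefficient of the weight-$(-1/2)$ vector-valued modular form underlying $\varphi$, which grows like the exponential of a constant times its discriminant; boundedness of the finite sum therefore requires exponentially precise cancellation among these terms. I would try to produce that cancellation by rewriting $\sum_r c(nr+ar^2,l-br)$ as a specialization of a Fourier coefficient of the additive (Gritsenko) lift $\tfrac{1}{2\pi i}\log\Phi_\varphi$ along the Humbert divisor cut out by $q^ay^b$, and then invoking the product formula for $\Phi_\varphi$: in a Rademacher-type expansion the would-be dominant contribution is controlled entirely by the polar coefficients of $\varphi$ (finite in number, here all nonnegative), and holomorphy of $\Phi_\varphi$ along that divisor makes it cancel, leaving a bounded remainder. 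Making this rigorous — bounding all subleading terms and showing that only finitely many linear conditions on the polar part of $\varphi$ are really involved — is the crux; as a byproduct, identifying those finitely many conditions would give an alternative, lift-free proof that $\dim\Js^{a,b}_m>0$ by comparing their number with $\dim J_{0,m}$.
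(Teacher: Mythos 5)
First, a point of framing: the statement you are proving is labeled a \emph{conjecture} in the paper precisely because the authors do not prove it (they say so explicitly and only assemble numerical and analytic evidence), so your proposal must stand entirely on its own. It does not, because the sharper statement you reduce to is false for every $m\geq 2$. Slow growth about $q^0y^b$ forces $b^2\leq m$; this is recalled in the introduction and follows from the $\alpha$-test of Corollary~\ref{cor:AlphaTest}: applied to the polar term $q^0y^b$ itself ($n=0$, $l=b$), choosing $j$ with $|j/b-b/(2m)|\leq 1/(2b)$ gives $\alpha\geq b^2/(4m)-m/(4b^2)$, which is positive whenever $b^2>m$. Your $\varphi_m$ has $c(0,m)=1$ by construction, so the term $q^0y^m$ is present and, for $m\geq 2$, forces $\alpha>0$; hence $\varphi_m$ is \emph{fast} growing about $q^0y^m$, and indeed $\Js^{0,m}_m=0$ for all $m\geq 2$. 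The only index for which your target statement holds is $m=1$.

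The underlying mechanism is also wrong, not merely unproven. Nonnegativity of the polar part does make the Borcherds lift holomorphic, but boundedness of $f_{a,b}$ is a property of the coefficients of $\varphi$ itself --- a finite sum of coefficients of unbounded discriminant whose generic behavior is exponential growth --- and holomorphy of the lift does not supply the required cancellation. A minimal counterexample is $\phi_{0,1}^2\in J_{0,2}$ (essentially your $\varphi_2=2\,(\phi_{0,1}|V_2)$, which likewise contains $y^{\pm 2}$): all of its polar coefficients are nonnegative, yet it is fast growing about its maximal polar term $y^2$ since $2>\sqrt{2}$, and it fails to lie in $\Js^{0,1}_2$ as well because it carries terms of polarity $4>1$. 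As Proposition~\ref{prop:GeneratingFunctions}, Corollary~\ref{cor:AlphaTest}, and Table~\ref{table:ybTable} make clear, membership in $\Js^{a,b}_m$ is governed by the \emph{vanishing} of a specific collection of polar coefficients (those of polarity exceeding $b^2-4ma$, or with $\alpha>0$), not by their signs; this is why the paper's evidence for the conjecture runs through bounding the minimal achievable polarity $P(m)$ and exhibiting explicit low-polarity forms such as theta quotients and $\varphi_{-2,1}\cK_{m-1}$, rather than through positivity of the polar part. You correctly flag the deferred step as the crux, but as stated it is false rather than just difficult, and the explicit family you build is already excluded by the paper's own criterion.
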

To explain our interest in this conjecture, note that it is somewhat surprising that slow growing forms exist in the first place: If a wJf has a term of maximal polarity $q^ay^b$, then its coefficients grow like
\be
|c(n,l)| \sim \exp \pi \sqrt{ \frac{b^2-4ma}{m^2}(4mn-l^2)}\
\ee
in the limit of large discriminant $4mn-l^2 \gg 0$. 
For (\ref{eq:sumsofcoeffs}) to be bounded, the exponentially large terms thus have to cancel out almost completely, which is not something we would expect for a generic form $\varphi$. However, there are indeed examples where such cancellations happen. The simplest is $\varphi_{0,1}$, the unique (up to normalization) wJf of weight 0 and index 1. In this note we provide evidence that such slow growing forms exist for every index $m$. Even though we do not manage to prove this conjecture, we gather numerical and analytical evidence in favor of it.

Let us now explain where definition~(\ref{eq:sumsofcoeffs}) comes from, and why we are interested in slow growing wJf in the first place. 
A wJf of weight 0 can be exponentially lifted to a meromorphic Siegel paramodular form \cite{MR1616929}. It is the Fourier coefficients of this Siegel form that we want to study.
Since this form meromorphic,  we need to be careful to specify which region we are expanding in when defining its Fourier coefficients. Its poles are given by Humbert surfaces specified by $a,b$. $f_{a,b}(n,l)$ then describes the growth of the Fourier coefficients in an appropriate limit when expanded around that particular Humbert surface \cite{Sen2011a,Belin:2019jqz}.
The most basic example of such a lift is the wJf $\varphi_{0,1}$, which is lifted to the reciprocal of the Igusa cusp form $\chi_{10}$. This is the case originally studied by physicists to describe the entropy of certain black holes \cite{Strominger:1996sh}, and was further explored in \cite{Dabholkar:2012nd}. Our results are a generalization of this case.

We note that for the purpose of obtaining Fourier coefficients of SMF, strictly speaking  we are not interested in $\Js^{a,b}_m$, but rather in the subset
\be
\hat\Js^{a,b}_m := \{ \varphi \in \Js^{a,b}_m : c(a,b)=1 \}\ .
\ee
A form in $\hat\Js^{a,b}_m$ is guaranteed to lead to a (simple) pole in the corresponding exponentially lifted Siegel form.
$\hat\Js$ is an affine space. If it is non-empty, that is if $\Js$ contains at least one form with $c(a,b)=1$, then  $\dim \hat\Js = \dim \Js -1$, since the linear constraint $c(a,b)=0$ on $\Js$ has rank 1. It turns out this is almost always what happens. In the following we will therefore always give $\dim \Js$, and indicate explicitly the few cases when $\hat\Js = \emptyset$.

The Fourier coefficients of Siegel modular forms give one way in which the $f_{a,b}(n,l)$ are connected to automorphic forms. Let us mention that for $a=0$, there is also another connection: in \cite{Belin:2019jqz} it was shown that the $f_{0,b}(n,l)$ are Fourier coefficients of certain vector valued modular functions. In fact this gave an explicit test for when a wJf is slow growing, and very simple expressions for the $f_{0,b}(n,l)$ in this case --- see theorem~\ref{prop:GeneratingFunctions} below. There is no analogue test for $a>0$, and the situation is thus much less clear. We discuss the case $a>0$ in section~\ref{s:qayb} and find some evidence that the $f_{a,b}$ could again be coefficients of some modular object.

Finally let us point out that slow growth seems to be related to the maximal polarity of the terms appearing in a form.
In \cite{Belin:2019jqz} for instance it was established that $\varphi$ can only be slow growing about $q^0 y^b$ if $b^2 \leq m$. In section~\ref{s:qayb} we give examples of forms growing slowly about $q^ay^b$ whose maximal polarity is $b^2-4ma > m$, but only slightly so. 
The implication that slow growing forms can only have terms of relatively low polarity. It thus makes sense to look for forms of low polarity. A priori it is not clear that such wJf even exist for any $m$, regardless of any question of slow growth: The issue is that even though for weight 0 forms, the polar terms uniquely determine $\varphi$, it is not guaranteed that for a choice of polar terms there exists a corresponding wJf $\varphi$. It is believed \cite{Gaberdiel:2008xb} that for any $m$, there exist wJfs whose most polar term has polarity around $m/2$. In particular this allows for the existence of slow growing wJf.
In section~\ref{s:polar} we provide strong numerical evidence for this belief, and in particular establish the existence of such forms up to $m=1000$. We note that a similar question is discussed in \cite{Dabholkar:2012nd}, where so-called optimal wJf $\cK_m\in J_{2,m}$ are constructed, whose maximal polarity is 1. It is natural to believe that $\varphi_{-2,1}\cK_m$ is then a slow growing form for index $m+1$, which is indeed true for the cases that we checked. A proof of this belief would of course prove conjecture~\ref{conjMain}.

In section~\ref{s:yb} we then discuss weak Jacobi forms that have slow growth around $q^0y^b$. Following up on an observation in \cite{Belin:2020nmp}, we give infinite families of weak Jacobi forms based on quotients of theta functions that are all slow growing. We obtain bounds on the $\dim \Js^{0,b}_m$, and compute these dimensions explicitly up to $m=61$. Altogether we establish that slow growing forms exist for every index up to $m=78$.

In section~\ref{s:qayb} we give some analytical and numerical results on the growth about terms $q^a y^b$. This case is harder because unlike $a=0$, there is no longer a straightforward criterion to determine if a form is slow growing. Nonetheless, we manage to prove slow growth for certain cases. In the process we clarify a question raised in \cite{Belin:2018oza}, where an exponentially lifted Siegel modular form with two Humbert surface of the same discriminant was considered. The physical expectation was that the expansion around both surfaces should essentially give the same coefficients, which is what we prove here.
For many other cases we give strong numerical evidence for or against slow growth by evaluating the first few coefficients, giving a better idea on the form of $\Js^{a,b}_m$. We give examples of forms growing slowly about $q^ay^b$ whose maximal polarity is $b^2-4ma > m$, and we also give examples that are slow growing about one term but fast growing about another term of the same polarity, thus establishing that the polarity of the term is not enough to determine the type of growth.

\emph{Acknowledgments}: We thank Nathan Benjamin for useful discussions.
The work of C.A.K. is supported in part by the Simons Foundation Grant No.~629215.

\section{Polar Terms of Weak Jacobi Forms}\label{s:polar}
Let $J_{0,m}$ be the space of weak Jacobi forms of weight $0$ and index $m$ \cite{MR781735}. Its structure is very simple: the ring of wJf of weight 0 is freely generated by three forms $\phi_{0,1}$, $\phi_{0,2}$, $\phi_{0,3}$ given \eg in \cite{Gritsenko:1999fk}. $J_{0,m}$ is then spanned by products $\phi_{0,1}^a\phi_{0,2}^b\phi_{0,3}^c$, with $a+2b+3c=m$.

In practice however, working with wJf of already moderate index can be quite cumbersome, since it involves expanding products of series expansions to fairly high order. It is therefore important to have computationally efficient expressions for the generators. We used
\begin{align}\label{phi01}
\phi_{0,1}&=4\left(\frac{\theta_{2}(\tau,z)}{\theta_{2}(\tau,0)}^2+\frac{\theta_{3}(\tau,z)}{\theta_{3}(\tau,0)}^2+\frac{\theta_{4}(\tau,z)}{\theta_{4}(\tau,0)}^2\right), \\
\label{phi02}
\phi_{0,2}&=\frac{1}{2}\eta(\tau)^{-4} \underset{m,n \in \mathbb{Z}}{\sum} (3m-n) (\frac{-4}{m})(\frac{12}{n})q^{\frac{3m^2+n^2}{24}}y^{\frac{m+n}{2}}, 
\end{align}
and 
\begin{align}\label{phi03}
    \phi_{0,3}=\left(\frac{q^{1/24}}{\eta(\tau)}\left(\underset{l \in\mathbb{Z}}{\sum}q^{6l^2+l}y^{\frac{12l+1}{2}}+\underset{l\in\mathbb{Z}}{\sum}q^{6l^2-l}y^{\frac{12l-1}{2}}-\underset{l\in\mathbb{Z}}{\sum}q^{6l^2+5l+1}y^{\frac{12l+5}{2}}-\underset{l\in\mathbb{Z}}{\sum}q^{6l^2-5l+1}y^{\frac{12l-5}{2}}\right)\right)^2.
\end{align}
We are using this novel form because it can be evaluated more quickly, as it requires only few multiplications, and no divisions by expressions in multiple variables.

As mentioned in the introduction, wJf cannot have slow growth about $q^0 y^b$ if $b>\sqrt{m}$. We therefore want to investigate wJf with no terms of high polarity. 
To this end we define
\begin{definition}\label{defPm}
	Let $J^P_{0,m}:= \{ \varphi_{0,m} \in J_{0,m} \mid c(n,l)=0 \text{ for } l^2-4mn > P \}$. Define \textit{$P(m)$} to be the integer $P$ such that $J_{0,m}^{P}=0$ and $J_{0,m}^{P+1} \neq 0$.
\end{definition}
$P(m)$ is well-defined since $J^P_{0,m} \subset J^{P+1}_{0,m}$, $J^0_{0,m} =0$ for $m>0$, and moreover $J^{m^2}_{0,m}=J_{0,m}$.
In particular definition~\ref{defPm} implies that there is a non-zero wJf whose most  polar term has polarity $l^2-4mn = P(m)$, but no no-zero wJf that only has terms of polarity strictly smaller than $P(m)$. 

Given the generators of $J_{0,m}$, in principle it is straightforward to compute $P(m)$. In practice this involves expanding power series to high order to read off polar terms, which becomes computationally expensive very quickly. Using (\ref{phi01}-\ref{phi03}),
we computed $P(m)$ to index $m=61$. The result is plotted in figure~\ref{figure:ScatterPlotP(m)}.

\begin{figure}[H]
	\centering
	\includegraphics[]{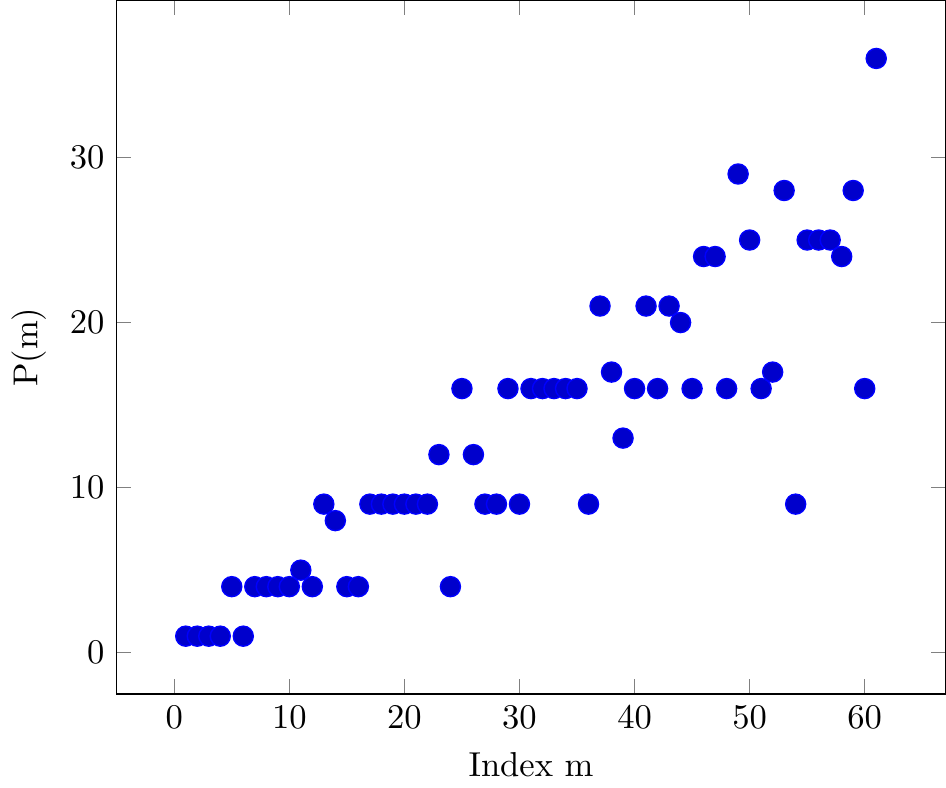}
	\caption{Scatterplot of $P(m)$, the smallest polarity such that there exists a weak Jacobi form with $P(m)$ as its most polar term.}
	\label{figure:ScatterPlotP(m)}
\end{figure}
We are interested in the asymptotic behavior of $P(m)$, which unfortunately is hard to compute directly.
For large $m$, \cite{Gaberdiel:2008xb} conjectured the following form for $P(m)$:
\begin{conj}\label{conjPm}
As $m\to\infty$, $P(m) = \frac m2 + O(m^{1/2})$.
\end{conj}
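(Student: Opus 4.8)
\emph{A possible approach.} The plan is to recast the definition of $P(m)$ as a statement about the arithmetic of weakly holomorphic vector-valued modular forms, via the Eichler--Zagier theta decomposition \cite{MR781735}, and then to isolate the linear-algebra obstruction that controls it. Writing $\varphi = \sum_{\mu \bmod 2m} h_\mu(\tau)\,\theta_{m,\mu}(\tau,z)$, the assignment $\varphi \mapsto (h_\mu)$ identifies $J_{0,m}$ with the space of weakly holomorphic vector-valued modular forms of weight $-\tfrac12$ for the Weil representation attached to $\Z/2m\Z$ whose principal parts are supported on ``polar'' coefficients; a polar term $q^n y^l$ of $\varphi$ of polarity $p=l^2-4mn>0$ corresponds to the coefficient of $q^{-p/4m}$ in the component $h_l$. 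Two classical inputs are needed. First, a holomorphic Jacobi form of weight $0$ and positive index vanishes \cite{MR781735}, so $\varphi$ is determined by its polar coefficients; hence $J_{0,m}$ embeds into $\BC^{N(m)}$, where $N(m)$ is the number of polar terms, and the codimension of the image equals $\dim \mathrm{Obs}_m$, where $\mathrm{Obs}_m$ is the ``obstruction space'' of cusp forms of the complementary weight $\tfrac52$ for the dual Weil representation (which, under the Eichler--Zagier dictionary, is a space of (skew-holomorphic) Jacobi cusp forms of weight $3$ and index $m$). Second, by the Borcherds--Bruinier duality for prescribing principal parts, a polar vector is realised by some $\varphi\in J_{0,m}$ if and only if it is annihilated, coefficient by coefficient, by every element of $\mathrm{Obs}_m$. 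Intersecting with the subspace of polar vectors supported in polarity $\le P$ gives the exact identity
\[
\dim J_{0,m}^{P} \;=\; N_{\le P}(m) \;-\; \operatorname{rank}\Bigl(\mathrm{Obs}_m \longrightarrow \BC^{\,N_{\le P}(m)},\ g \longmapsto \bigl(a_{g,l}(l^2-4mn)\bigr)_{0<l^2-4mn\le P}\Bigr),
\]
where $N_{\le P}(m)$ is the number of polar terms of polarity at most $P$ and the $a_{g,l}(\,\cdot\,)$ are the Fourier coefficients of $g$. Thus $P(m)$ is exactly the largest $P$ for which this coefficient map is surjective.

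The upper bound $P(m) \le \tfrac m2 + O(\sqrt m)$ is the softer half and should need essentially no hard input. Since the rank above is at most $\dim\mathrm{Obs}_m$, one has $\dim J_{0,m}^P \ge N_{\le P}(m) - \dim\mathrm{Obs}_m$, so it suffices to exhibit $P = \tfrac m2 + O(\sqrt m)$ with $N_{\le P}(m) > \dim\mathrm{Obs}_m$. Here $\dim\mathrm{Obs}_m = N(m) - \dim J_{0,m}$, and $\dim J_{0,m}$ is the number of partitions of $m$ into parts $1,2,3$, equal to $\tfrac{m^2}{12}+\tfrac m2 + O(1)$; while $N(m)$ and $N_{\le P}(m)$ are elementary lattice counts. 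Reducing $l$ to $[1,m]$ one finds $N_{\le P}(m) = \#\{\,l\in[1,m]: 0<(l^2\bmod 4m)\le P\,\}$ for $P<4m$, whose equidistribution (via Gauss sums for $l^2\bmod 4m$) yields $N_{\le P}(m) = \tfrac P4 + O(\sqrt m\,\log m)$ and $N(m) = \tfrac{m^2}{12}+\tfrac{5m}{8}+O(\sqrt m\,\log m)$, hence $\dim\mathrm{Obs}_m = \tfrac m8 + O(\sqrt m\,\log m)$; the two quantities therefore cross at $P = \tfrac m2 + O(\sqrt m\,\log m)$. (To reach the sharp error $O(m^{1/2})$ one must control the lower-order terms of the Gauss-sum estimate, and equivalently the class-number contributions in the dimension formula for $\mathrm{Obs}_m$, more carefully; I regard the logarithmic loss as a technicality.) A more hands-on alternative is to analyse the maximal polarity of $\varphi_{-2,1}\cK_m$, where $\cK_m\in J_{2,m}$ is the optimal form of \cite{Dabholkar:2012nd}: showing directly that its most polar term has polarity $\tfrac m2 + O(\sqrt m)$, using the explicit description of $\cK_m$, would give the upper bound for index $m+1$ and simultaneously settle Conjecture~\ref{conjMain}.

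The lower bound $P(m) \ge \tfrac m2 - O(\sqrt m)$ is where the genuine difficulty lies, and I do not see how to establish it unconditionally; this is the main obstacle. By the identity above it is equivalent to showing that for $P$ slightly below $\tfrac m2$ the coefficient map $\mathrm{Obs}_m \to \BC^{N_{\le P}(m)}$ is surjective, i.e.\ that the low-polarity Fourier coefficients of the weight-$\tfrac52$ cusp forms are linearly independent functionals --- equivalently, that no nonzero weak Jacobi form of index $m$ has all of its polar terms of polarity $\le P$. The natural attack is via Poincaré series: for each admissible polarity value $p_0\le P$ one builds the weight-$\tfrac52$ vector-valued Poincaré series $g_{p_0}\in\mathrm{Obs}_m$ whose coefficient vector is $e_{p_0}$ plus an error governed by a Kloosterman--Bessel sum, and one wants the resulting square matrix $\bigl(a_{g_{p_0},l}(p)\bigr)_{p_0,p\le P}$ to be invertible. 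The obstruction is quantitative: there are $N_{\le P}(m)\sim\tfrac m8$ such series, while the Weil bound for the Kloosterman sums only gives errors of size $O(\sqrt m)$ per entry, so naive diagonal dominance fails; one needs either genuine cancellation in the sum over moduli (a Kuznetsov/large-sieve input adapted to the Weil representation of level $\sim 4m$), or a structural argument --- for instance, showing directly that the polar parts of the monomial basis $\{\phi_{0,1}^a\phi_{0,2}^b\phi_{0,3}^c : a+2b+3c=m\}$ of $J_{0,m}$ already span all of the high-polarity directions. Producing this quantitative non-degeneracy, uniformly in $m$, is the crux of any proof of the conjecture.
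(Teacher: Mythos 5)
The statement you are proving is Conjecture~\ref{conjPm}, which the paper attributes to \cite{Gaberdiel:2008xb} and does \emph{not} prove: its rigorous content is only the two-sided bound $\lceil m/6\rceil = P_-(m)\le P(m)\le P_+(m)=m+1$, plus a computable counting bound $P^+(m)$ whose agreement with $\tfrac m2+O(m^{1/2})$ is verified numerically up to $m=1000$. Your upper-bound half is, in substance, the paper's counting lemma in different clothing: the inequality $N_{\le P}(m)>\dim\mathrm{Obs}_m=N(m)-\dim J_{0,m}$ is the same pigeonhole as the paper's condition comparing the number of high-polarity terms to $j(m)$, and the Borcherds--Bruinier reformulation (correct in outline, modulo the usual care with the Eisenstein/constant-term obstruction and with the precise parametrization of polar terms) is a clean way to package the paper's ``rank of the polar-coefficient matrix'' discussion. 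What you add is the analytic evaluation of the lattice count via equidistribution of $l^2\bmod 4m$; if carried out, that would upgrade the paper's numerical observation $|P^+(m)-\tfrac m2|\le 2.1\,m^{1/2}$ to a theorem and would genuinely improve on the rigorous $P_+(m)=m+1$. But you have not carried it out, and by your own admission it lands at $O(m^{1/2}\log m)$ rather than the claimed $O(m^{1/2})$; removing that logarithm uniformly in $m$ (including highly composite $m$, where the Gauss-sum moduli degenerate) is not obviously a ``technicality.''

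The genuine gap is the lower bound, which you correctly identify as the crux, but the paper's own data shows why your proposed attack is harder than generic-nondegeneracy heuristics suggest. The coefficient map really does degenerate: $P(m)<P^+(m)$ at $m=39,51,54,58$, and at $m=54$ the drop is dramatic ($P(54)=9$ against $P^+(54)=25$, i.e.\ $P(54)\approx \tfrac m2-2.45\,m^{1/2}$). So the square matrix of Poincar\'e-series coefficients you want to invert is, for some $m$, very far from diagonally dominant, and a proof must quantify how far the rank can fall rather than argue that it generically does not. Note also that the paper's rigorous lower bound $P(m)\ge\lceil m/6\rceil$ comes from a soft argument you do not reproduce (each $\eta h_\mu$ is a holomorphic weight-$0$ form for a congruence group, hence constant, hence zero); even recovering $m/6$ along your route already requires the surjectivity you cannot establish. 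As written, the proposal proves neither half of the conjecture: the upper bound needs the equidistribution estimate executed and sharpened, and the lower bound remains entirely open, exactly as it does in the paper.
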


In what follows, we will give upper and lower bounds $P_+(m)\geq P(m) \geq P_-(m)$ for $P(m)$. The bounds we find are indeed compatible with conjecture~\ref{conjPm}.
Let us first discuss the lower bound $P_-(m)$ for $P(m)$. To this end, we prove the following proposition, which strengthens a result in \cite{Manschot:2008zb}:
\begin{proposition} \label{prop:PolarityDeterminesWjF}
 For $m>0$,	the polar terms $c(n,l)$ of polarity $l^2-4mn \geq m/6$ uniquely determine the weak Jacobi form $\varphi_{0,m}$.
\end{proposition}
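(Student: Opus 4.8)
The plan is to pass to the theta decomposition and reduce the assertion to a pole-order bound for vector-valued weakly holomorphic modular forms of weight $-\tfrac12$. First I would note that it suffices to show: if $\psi\in J_{0,m}$ has all of its polar coefficients $c(n,l)$ (those with $l^2-4mn>0$) supported on polarities strictly less than $m/6$, then $\psi=0$; applying this to the difference of two forms sharing all Fourier coefficients of polarity $\ge m/6$ then yields the proposition. Write the theta decomposition
\be
\psi(\tau,z)=\sum_{\mu\bmod 2m} h_\mu(\tau)\,\theta_{m,\mu}(\tau,z),\qquad \theta_{m,\mu}(\tau,z)=\sum_{n\equiv\mu\,(2m)}q^{n^2/4m}y^n ,
\ee
so that $h=(h_\mu)$ is a vector-valued weakly holomorphic modular form of weight $-\tfrac12$ for the Weil representation $\rho_m$ of $\mathrm{Mp}_2(\Z)$ attached to the finite quadratic module $(\Z/2m\Z,\ x\mapsto x^2/4m)$.

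Next I would translate the hypothesis into behaviour at the cusp. Since a Fourier coefficient $c(n,l)$ of a Jacobi form of index $m$ depends only on $4mn-l^2$ and on $l\bmod 2m$, the $q$-expansion of $h_\mu$ is $h_\mu(\tau)=\sum_{\Delta\equiv-\mu^2\,(4m)}C(\mu,\Delta)\,q^{\Delta/4m}$, and the terms with $\Delta<0$ (its principal part) are precisely the polar coefficients of $\psi$, the polarity of such a term being $-\Delta$. Hence the hypothesis says that the smallest exponent in each component satisfies $\mathrm{ord}_\infty(h_\mu)>-\tfrac1{24}$. The crux is then the statement that a nonzero vector-valued weakly holomorphic modular form of weight $k=-\tfrac12$ for $\rho_m$ must have some component with an exponent $\le k/12=-\tfrac1{24}$. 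I would obtain this from the valence formula for vector-valued modular forms: for $h\neq0$, on the relevant invariant subspace of dimension $d$,
\be
\sum_{\mu}\mathrm{ord}_\infty(h_\mu)\;+\;\bigl(\text{non-negative orders at the elliptic and interior points}\bigr)\;=\;\frac{k}{12}\,d\;+\;\bigl(\text{correction from }\rho_m\bigr) ,
\ee
so, provided the correction leaves no extra room, $\sum_{\mu}\mathrm{ord}_\infty(h_\mu)\le\tfrac{k}{12}d=-\tfrac{d}{24}$, contradicting $\mathrm{ord}_\infty(h_\mu)>-\tfrac1{24}$ on $d$ components; hence $h=0$ and $\psi=0$. (This refines the well-known fact that the polar terms of positive polarity already determine $\varphi_{0,m}$, which follows since a holomorphic weight-$0$ Jacobi form is constant.)

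The main obstacle will be exactly that last verification: controlling the contributions of the elliptic points and of $\rho_m(-I)$ to the valence formula for this half-integral-weight Weil representation, so that the threshold comes out to be precisely $m/6$ and not $m/6$ shifted by an $m$-dependent term. One has reason to expect it works, since it would give the lower bound $P(m)\ge\lceil m/6\rceil$. A route that sidesteps this bookkeeping is to use the module structure $J_{0,m}=\bigoplus_{a=0}^{m}M_{2a}(\mathrm{SL}_2(\Z))\,\phi_{-2,1}^{\,a}\phi_{0,1}^{\,m-a}$ together with the Eichler--Zagier theorem that a Jacobi form of index $m$ is determined by finitely many of its Taylor coefficients about $z=0$: since $\phi_{-2,1}$ vanishes to order $2$ at $z=0$ while $\phi_{0,1}$ does not, one can solve for the components $f_a\in M_{2a}$ recursively, and it then remains to check that the prescribed polar data of polarity $\ge m/6$ already determines the (roughly $m/6$) Fourier coefficients of each $f_a$ that enter. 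I expect the theta-decomposition argument to be the cleaner one to write down.
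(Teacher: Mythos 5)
Your reduction and bookkeeping are right and match the paper's: pass to the theta decomposition, observe that the polar part of $\varphi$ is the principal part of the $h_\mu$, and translate the hypothesis into $\mathrm{ord}_\infty(h_\mu) > -\tfrac{1}{24}$ for every component (indeed $-N/4m > -1/24 \iff N < m/6$). But the crux of your argument --- that a nonzero vector-valued weakly holomorphic form of weight $-\tfrac12$ for the Weil representation $\rho_m$ must have some component with leading exponent $\le -\tfrac1{24}$ --- is exactly the step you leave unproved. You invoke a valence formula with unspecified correction terms from the elliptic points and from $\rho_m$, and you acknowledge yourself that you have not verified these corrections "leave no extra room." As written, this is a genuine gap: the entire content of the proposition lives in that verification, and for a general representation such corrections really can shift the threshold, so the claim does not follow from the shape of the valence formula alone.

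The paper closes this gap without any valence formula, by a trick you may want to adopt: multiply each component by the Dedekind eta function. The $h_\mu$ are scalar weight $-\tfrac12$ forms on $\Gamma(4m)$ with multiplier $\left(\frac{c}{d}\right)$, and $\eta$ has weight $\tfrac12$ with the same multiplier on $\Gamma(24)$; since that multiplier squares to the identity, $\eta h_\mu$ is an honest weight-$0$ modular form on $\Gamma(\mathrm{lcm}(24,4m))$ with trivial multiplier. Under your hypothesis its $q$-expansion starts at exponent $-N/4m + \tfrac1{24} > 0$, so it is holomorphic at $q=0$ and lies in $M_0$ of that congruence group, hence is a constant; and since the leading exponent is strictly positive there is no constant term, so $\eta h_\mu = 0$. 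The factor $q^{1/24}$ of $\eta$ is precisely what converts your bound $-\tfrac1{24}$ into holomorphy, which is why the threshold comes out to be exactly $m/6$ with no $m$-dependent shift. Your alternative route via the module structure $\bigoplus_a M_{2a}\,\phi_{-2,1}^a\phi_{0,1}^{m-a}$ and Taylor coefficients is likewise only a sketch and would require its own nontrivial accounting of which Fourier coefficients of the $f_a$ are pinned down by the prescribed polar data.
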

\begin{proof} 
	Consider the theta decomposition \cite[equation (5.5)]{MR781735} of the weak Jacobi form, \begin{align} \label{eq:ThetaDecomp} \varphi_{0,m}(\tau,z)=\underset{\mu \text{ mod } 2m \in \mathbb{Z}/2m \mathbb{Z}}{\sum} h_\mu(\tau) \theta_{m,\mu}(\tau,z). \end{align} The polar terms of $\varphi_{0,m}$ appear in the negative $q$-power part of the Fourier expansions of $\{ h_\mu(\tau) : \mu \in \mathbb{Z}/2m \mathbb{Z} \}$. 
	
	First we show that the product $\eta(\tau)h_\mu(\tau)$, where $\eta$ is the Dedekind eta function, is a scalar modular form of weight zero for the congruence group $\Gamma\big(\text{lcm}(24,4m)\big)$. 
    The Dedekind eta function $\eta(\tau)=q^{1/24} \underset{m >0}\prod (1- q^m)$ is a scalar modular form of weight $1/2$ for $\Gamma(24)$ with multiplier system $(\frac{c}{d})$. The forms $h_\mu(\tau)$ are scalar modular forms of weight $-1/2$ for $\Gamma(4m)$ with the same multiplier system $(\frac{c}{d})$.
	Since $(\frac{c}{d})$ squares to the identity, the product $\eta(\tau) h_\mu(\tau)$ is a scalar modular form for $\Gamma\big(\text{lcm}(24,4m)\big)$ with trivial multiplier system.
	
	Now, given a weak Jacobi form with no polar terms of polarity greater than or equal to $m/6$, we show this form must be identically zero. Let $N$ be the maximal polarity of this weak Jacobi form, this value shall also be the maximal polarity of $h_\mu(\tau)$ in its theta decomposition. The Fourier expansion of $\eta(\tau)h_\mu(\tau)$ then begins at $c(N,\mu)q^{-N/4m + \frac{1}{24}}$. We have $N < \frac{m}{6}$ by assumption, so
	\begin{align}\label{polcond}
	-N/4m + \frac{1}{24} > 0,
	\end{align} which implies that $\eta(\tau)h_\mu(\tau)$ has no singularity at $q=0$. It is thus indeed an element of $M_0\left(\Gamma\big(\text{lcm}(24,4m)\big)\right)$. However, the only modular forms in $M_0\left(\Gamma\big(\text{lcm}(24,4m)\big)\right)$ are constants, whose Fourier expansion consists of only the $q^0$ term. Since (\ref{polcond}) rules out a constant term, $\eta(\tau)h_\mu(\tau)$ is zero.
\end{proof}
From this it follows immediately that
\begin{corollary}
	 $P_-(m):=\lceil \frac{m}{6} \rceil$ is a lower bound for $P(m)$. 
\end{corollary}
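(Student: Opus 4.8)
This corollary is immediate from Proposition~\ref{prop:PolarityDeterminesWjF}, so the plan is just to record the three short steps. First I would promote the uniqueness statement to a vanishing statement: if $\varphi_{0,m}\in J_{0,m}$ has no term $q^{n}y^{l}$ of polarity $l^{2}-4mn\geq m/6$, then it agrees with the zero form on every coefficient $c(n,l)$ with $l^{2}-4mn\geq m/6$ (both vanish there), so by Proposition~\ref{prop:PolarityDeterminesWjF} it must equal the zero form. Contrapositively, every nonzero weak Jacobi form of index $m$ carries at least one term whose polarity is $\geq m/6$.

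Next I would invoke integrality: since $l^{2}-4mn$ is an integer, the inequality $l^{2}-4mn\geq m/6$ sharpens to $l^{2}-4mn\geq\lceil m/6\rceil$. Hence the maximal polarity of any nonzero $\varphi_{0,m}\in J_{0,m}$ is at least $\lceil m/6\rceil$. Since $P(m)$ is by construction the smallest polarity that occurs as the maximal polarity of a nonzero weak Jacobi form of index $m$ (Definition~\ref{defPm}), this gives $P(m)\geq\lceil m/6\rceil$, i.e., $P(m)\geq P_{-}(m)$.

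There is no genuine obstacle; the only point needing care is the rounding. The bound coming out of Proposition~\ref{prop:PolarityDeterminesWjF} is the \emph{strict} statement that all polarities being $<m/6$ forces $\varphi_{0,m}=0$, so its negation is that some polarity is $\geq m/6$, and it is the integrality of $l^{2}-4mn$ that then yields $\lceil m/6\rceil$ rather than $\lceil m/6\rceil-1$. No further modular-form input or estimates are required beyond Proposition~\ref{prop:PolarityDeterminesWjF}.
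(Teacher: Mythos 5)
Your argument is correct and is exactly the intended deduction: the paper offers no separate proof of this corollary (it is stated as an immediate consequence of Proposition~\ref{prop:PolarityDeterminesWjF}, whose proof in fact directly establishes the vanishing statement you re-derive from uniqueness), and your two additional observations --- passing to the contrapositive and using integrality of $l^{2}-4mn$ to sharpen $\geq m/6$ to $\geq\lceil m/6\rceil$ --- are precisely the steps being left implicit. Nothing further is needed.
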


Next let us discuss upper bounds $P_+(m)$. One way to obtain such a bound is the following result of \cite{Dabholkar:2012nd}:
\begin{theorem} \cite[Theorem 9.4]{Dabholkar:2012nd}
For each $m \in \N$, there exists a wJf $\cK_{m-1}\in J_{2,m-1}$ whose maximal polarity is 1. 
\end{theorem}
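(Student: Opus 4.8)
The plan is to deduce existence from the theta decomposition \eqref{eq:ThetaDecomp} together with the theory of vector-valued modular forms of half-integral weight. First I would set up the dictionary: a weak Jacobi form $\varphi$ of weight $2$ and index $m$ is the same datum as a vector-valued modular form $(h_\mu)_{\mu\bmod 2m}$ of weight $3/2$ for the Weil representation $\rho_m$ attached to $\Z/2m\Z$ with quadratic form $x\mapsto x^2/4m$. Writing $h_\mu(\tau)=\sum_D c_\mu(D)\,q^{D/4m}$ with $D=4mn-\mu^2$, the defining condition $c(n,l)=0$ for $n<0$ becomes $c_\mu(D)=0$ for $D<-\mu^2$, and the terms with $-\mu^2\le D<0$ are precisely the polar terms, of polarity $-D$. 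Hence a weak Jacobi form of weight $2$ and index $m$ with maximal polarity $1$ is exactly a weakly holomorphic vector-valued form of weight $3/2$ for $\rho_m$ whose principal part (its part of strictly negative $q$-exponent) consists only of the single exponent $-1/4m$ in the components $\mu\equiv\pm 1\ (\mathrm{mod}\ 2m)$, with nonzero coefficient there.

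The heart of the argument is then the existence of such a $(h_\mu)$. I would invoke the standard obstruction theory for weakly holomorphic vector-valued modular forms: a weight-$3/2$ form for $\rho_m$ with a prescribed principal part exists if and only if that principal part pairs to zero against every cusp form in the dual space $S_{1/2}(\rho_m^-)$ of weight $1/2$. By the Serre--Stark theorem (in its vector-valued version) every holomorphic modular form of weight $1/2$ is a linear combination of unary theta series, none of which is cuspidal, so $S_{1/2}(\rho_m^-)=0$: there is no obstruction, and the prescribed principal part is always realized. Equivalently, since $3/2<2$ one can build $(h_\mu)$ directly as an absolutely convergent Poincar\'e/Rademacher series attached to that principal part. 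Either way the resulting form has exactly the prescribed principal part, so the associated Jacobi form has all coefficients with $n<0$ equal to zero --- it is a genuine \emph{weak} Jacobi form --- carries a nonzero term of polarity $1$, and has no term of polarity $\ge 2$; relabelling $m\mapsto m-1$ yields $\cK_{m-1}\in J_{2,m-1}$, the degenerate value $m-1=0$ (where $J_{2,0}=0$) being excluded or read vacuously.

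The step I expect to be the main obstacle is the half-integral-weight input: one must check that passing to the metaplectic cover and to weight $3/2$ introduces no obstructions beyond the cusp-form pairing above, and that $S_{1/2}(\rho_m^-)$ genuinely vanishes for every $m$; both are known, but they are the real crux, and keeping the normalizations of $\rho_m$ and its dual straight is the fiddly part. For comparison, a purely ring-theoretic attempt using $J^{\mathrm{weak}}_{*,*}=M_*[\phi_{-2,1},\phi_{0,1}]$ --- writing $\varphi=\sum_{a\ge 1} f_a\,\phi_{-2,1}^{\,a}\phi_{0,1}^{\,m-a}$ with $f_a\in M_{2a+2}$ and imposing the vanishing of all polar coefficients of polarity $\ge 2$ --- founders because the number of free parameters $\sum_a\dim M_{2a+2}$ and the number of linear conditions are both of order $m^2$, so a naive dimension count is inconclusive; this is why routing through the obstruction theory (or the Rademacher series) appears essential.
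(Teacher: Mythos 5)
First, a point of order: the paper does not prove this statement at all --- it is imported verbatim as Theorem 9.4 of \cite{Dabholkar:2012nd} --- so there is no in-paper proof to compare against; I will therefore judge your argument on its own merits and against the source. Your reduction is the right one and is essentially the route taken there: theta decomposition, the identification of weak Jacobi forms of weight $2$ and index $m$ with weakly holomorphic vector-valued forms of weight $3/2$ for (the dual of) the Weil representation $\rho_m$, the translation of ``maximal polarity $1$'' into a principal part supported on the single exponent $-1/4m$ in the components $\mu\equiv\pm1$, and existence via the pairing of the prescribed principal part against the weight-$1/2$ obstruction space.

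However, there is a genuine gap at precisely the step you flag as the crux. Your claim that $S_{1/2}(\rho_m^-)=0$ because ``by Serre--Stark every weight-$1/2$ form is a combination of unary theta series, none of which is cuspidal'' is false: Serre--Stark says $M_{1/2}$ is spanned by series $\theta_{\psi,t}(\tau)=\sum_n\psi(n)q^{tn^2}$ with $\psi$ even primitive, and those with $\psi$ non-trivial \emph{are} cusp forms; the smallest real example is $\eta(24\tau)=\sum_{n\ge1}\left(\tfrac{12}{n}\right)q^{n^2}$, a nonzero cusp form of weight $1/2$. So the obstruction space does not vanish for free. What is actually needed is that no such cuspidal theta series transforms under the particular representations $\rho_m$ arising from Jacobi theory, equivalently the non-existence of (skew-)holomorphic Jacobi cusp forms of weight one; this is a theorem of Skoruppa, proved via Serre--Stark plus a genuinely non-trivial analysis, and it is the real content of the existence statement --- it cannot be replaced by the blanket assertion $S_{1/2}=0$. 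Your fallback is not a repair: Poincar\'e series attached to a negative exponent converge absolutely only for weight greater than $2$, not less; in weight $3/2$ the Rademacher sum must be regularized and generically yields a \emph{mock} modular form whose shadow lies exactly in the weight-$1/2$ obstruction space, so holomorphy and true modularity of that sum is equivalent to, not a substitute for, the vanishing you need. With Skoruppa's theorem correctly invoked in place of these two steps the argument closes and agrees with the proof in \cite{Dabholkar:2012nd}; as written, it assumes the hard part.
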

From this we obtain the following corollary: 
\begin{corollary}
The wJf $\varphi_{-2,1}\cK_{m-1}\in J_{0,m}$ has no terms of polarity greater than $m+1$. Here $\varphi_{-2,1}$ is the unique wJf of weight -2 and index 1 whose maximal polar term is $y$ as defined in \cite[p.108]{MR781735}. 
\end{corollary}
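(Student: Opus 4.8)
The plan is to bound the polarity of any Fourier term of the product by tracking how polarity and index interact under multiplication of Jacobi forms. First I would write the two factors as
\[
\varphi_{-2,1}(\tau,z)=\sum_{n_1,l_1}a(n_1,l_1)\,q^{n_1}y^{l_1},\qquad
\cK_{m-1}(\tau,z)=\sum_{n_2,l_2}b(n_2,l_2)\,q^{n_2}y^{l_2}.
\]
Since the maximal polar term of $\varphi_{-2,1}$ is $y$, every term occurring in it satisfies $l_1^2-4n_1\le 1$; by the theorem quoted above, every term of $\cK_{m-1}$, which has index $m-1$, satisfies $l_2^2-4(m-1)n_2\le 1$. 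The coefficient of $q^ny^l$ in $\varphi_{-2,1}\cK_{m-1}\in J_{0,m}$ is the (finite) sum of $a(n_1,l_1)b(n_2,l_2)$ over $n_1+n_2=n$, $l_1+l_2=l$, so it suffices to show $l^2-4mn\le m+1$ whenever $a(n_1,l_1)\neq 0$ and $b(n_2,l_2)\neq 0$.

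The key step is the elementary convexity inequality
\[
\frac{(l_1+l_2)^2}{t_1+t_2}\ \le\ \frac{l_1^2}{t_1}+\frac{l_2^2}{t_2}\qquad(t_1,t_2>0),
\]
which on clearing denominators is just $(l_1t_2-l_2t_1)^2\ge 0$. I would apply it with $t_1=1$ and $t_2=m-1$, so that $t_1+t_2=m$, and then feed in the two polarity bounds $l_1^2\le 1+4n_1$ and $l_2^2\le 1+4(m-1)n_2$ to obtain
\[
\frac{l^2}{m}\ \le\ l_1^2+\frac{l_2^2}{m-1}\ \le\ (1+4n_1)+\frac{1+4(m-1)n_2}{m-1}\ =\ 1+\frac{1}{m-1}+4n .
\]
Multiplying through by $m$ gives $l^2-4mn\le m+1+\frac{1}{m-1}$. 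Since $l^2-4mn$ is an integer and $\frac{1}{m-1}<1$ once $m\ge 3$, this forces $l^2-4mn\le m+1$, which is the assertion; the cases $m=1,2$ would be handled directly from the explicit, low-dimensional description of $J_{0,m}$.

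I do not anticipate a serious obstacle here. The one thing that matters is the choice $t_1=1$, $t_2=m-1$ in the convexity inequality, that is, the fact that indices add under multiplication of Jacobi forms --- this is exactly what makes the bound grow linearly in $m$ rather than quadratically. Everything else is bookkeeping plus the final integrality rounding; in particular the argument uses nothing about $\cK_{m-1}$ beyond the polarity bound furnished by the cited theorem, so it is insensitive to how that form is constructed.
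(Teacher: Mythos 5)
Your argument is essentially the paper's: the convexity inequality $\frac{(l_1+l_2)^2}{1+(m-1)}\le l_1^2+\frac{l_2^2}{m-1}$ is exactly the paper's formula for the polarity of a product term with the nonpositive remainder $-\frac{1}{m-1}\bigl((m-1)l_1-l_2\bigr)^2$ dropped, and both proofs then feed in the polarity bounds $\Delta,\Delta'\le 1$ for the two factors coming from their indices $1$ and $m-1$. Your closing integrality step is in fact a useful supplement rather than a detour, since the raw bound is $m+1+\frac{1}{m-1}$ rather than $m+1$ (the paper's displayed inequality silently absorbs that extra $\frac{1}{m-1}$), though, as you note, $m\le 2$ must still be handled separately.
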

\begin{proof}

The polarity of a term in the product of two wJf of different indexes is not so straightforward, so we must prove $\varphi_{-2,1}\cK_{m-1}\in J_{0,m}$ cannot have polarity exceeding $m+1$. 
To this end, let $q^Ny^L$ be a term of $\varphi_{-2,1}$ whose polarity is $\Delta$, and let $q^n y^l$ be a term of $\cK_{m-1}$ whose polarity is $\Delta'$. Then the term $q^{N+n}y^{L+l}$ of $\varphi_{-2,1}\cK_{m-1}$ has polarity
\begin{align}
m\Delta'+(1+\frac{1}{m-1})\Delta-\frac{1}{m-1}\left((m-1)L-l\right)^2 \leq m+1-\frac{1}{m-1}\left((m-1)L-l\right)^2,
\end{align}
where we used the bounds $\Delta, \Delta' \leq 1$. The maximum value of the right hand side is $m+1$, occurring when $(m-1)L=l$. 
\end{proof} 
It follows that $P_+(m)=m+1$ is an upper bound for $P(m)$. Unfortunately, this bound is slightly too weak for our purposes, since we would like to find wJf of polarity $m$ or less. Nonetheless, we note that the forms obtain in this way are natural candidates for being slow growing. In fact, for the first few low lying values of $m$ we checked explicitly that they are slow growing. This is the case even for those with maximal polarity equal to $m+1$, in which case the term of maximal polarity is $q^ay^b$ with $a>0$.

More importantly, the bound $P_+(m)=m+1$ also seems to be quite far from optimal when compared to conjecture~\ref{conjPm}. Let us therefore describe an alternative approach, which will lead to a tighter numerical bound.
For this we use the following counting argument:
\begin{lemma} Let $p_{\mathscr{P}}(m):=\overset{m}{\underset{l = \lceil \sqrt{\mathscr{P}} \rceil}{\sum}} \lceil \frac{l^2 - \mathscr{P}}{4m} \rceil$ be the number of polar terms with polarity greater or equal to $\mathscr{P}$. For any $\mathscr{P}$ satisfying the inequality $p_{\mathscr{P}}(m) \leq j(m) < p_{\mathscr{P} +1}(m)$, we have
\begin{align}
P(m) \leq \mathscr{P} \ ,
\end{align}
where $j(m) = \dim J_{0,m}$.  Denote by $P^+(m)$ the smallest such $\mathscr{P}$.
\end{lemma}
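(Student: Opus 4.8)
The plan is to exploit the fact that a weak Jacobi form of weight $0$ is completely determined by its polar part, so that the map sending $\varphi_{0,m}$ to the tuple of its polar coefficients is injective. First I would make precise the source and target of this linear map: the source is $J_{0,m}$, of dimension $j(m)$, and the target is the finite-dimensional space $\C^{N(\mathscr{P})}$ whose coordinates are indexed by the pairs $(n,l)$ with $0 < l^2 - 4mn$ and $l^2 - 4mn \geq \mathscr{P}$ (i.e.\ the ``highly polar'' terms, of polarity at least $\mathscr{P}$). By Proposition~\ref{prop:PolarityDeterminesWjF}, applied with $\mathscr{P} \geq \lceil m/6\rceil$, this restriction-to-polar-part map is already injective when we only remember coefficients of polarity $\geq m/6$; a fortiori it is injective when we remember all coefficients of polarity $\geq \mathscr{P}$ for any $\mathscr{P} \leq m/6$. (For $\mathscr{P} > m/6$ one argues directly that a form all of whose polar terms have polarity $< \mathscr{P}$ and which additionally has no polar term of polarity in $[\mathscr{P}, m/6)$... actually the cleanest route is to only assert the lemma for $\mathscr{P}$ small enough that Proposition~\ref{prop:PolarityDeterminesWjF} applies, and note $P(m) \leq P^+(m)$ combined with the already-known $P(m) \leq m+1$ covers the rest; but in fact the counting inequality $p_{\mathscr{P}}(m) \leq j(m)$ will itself force $\mathscr{P}$ to be small, so this is not an issue.)

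Next I would identify $N(\mathscr{P})$, the number of coordinates in the target, with $p_{\mathscr{P}}(m)$. A term $q^n y^l$ has polarity $\geq \mathscr{P}$ iff $4mn \leq l^2 - \mathscr{P}$, i.e.\ iff $0 \leq n \leq (l^2-\mathscr{P})/4m$; for this range to be nonempty we need $l^2 \geq \mathscr{P}$, i.e.\ $|l| \geq \lceil \sqrt{\mathscr{P}}\,\rceil$. For each such $l$ the number of admissible $n \in \Z_{\geq 0}$ is $\lfloor (l^2-\mathscr{P})/4m \rfloor + 1 = \lceil (l^2 - \mathscr{P} + 1)/4m \rceil$; summing over $l$ (using $l$ and $-l$ give the same count, but $c(n,l) = c(n,-l)$ for weight-$0$ wJf so these are not independent coordinates — one restricts to $l \geq 0$, or equivalently the paper's formula already sums $l$ from $\lceil\sqrt{\mathscr{P}}\,\rceil$ to $m$ with the understanding that $c(n,l)=0$ for $l > m$ when the polarity forces it, since $l^2 - 4mn \leq m^2$ and polarity $\geq \mathscr P >0$ bounds $l$). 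I would just cite the definition of $p_{\mathscr{P}}(m)$ given in the statement and confirm it counts exactly these independent polar coordinates.

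The conclusion is then pure linear algebra: if the injective linear map $J_{0,m} \hookrightarrow \C^{p_{\mathscr{P}}(m)}$ exists, then $j(m) \leq p_{\mathscr{P}}(m)$ — equivalently, if $p_{\mathscr{P}}(m) < j(m)$ then there is a nonzero $\varphi_{0,m}$ all of whose terms of polarity $\geq \mathscr{P}$ vanish, hence a nonzero form with maximal polarity $< \mathscr{P}$, hence $P(m) < \mathscr{P}$, i.e.\ $P(m) \leq \mathscr{P} - 1 < \mathscr{P}$. Contrapositively, $p_{\mathscr{P}}(m) \geq j(m)$ would be consistent with $P(m)$ being as large as $\mathscr{P}$... so to get the bound $P(m) \leq \mathscr{P}$ we instead run the argument at level $\mathscr{P}+1$: the hypothesis $j(m) < p_{\mathscr{P}+1}(m)$ gives, by the above, a nonzero form with no term of polarity $\geq \mathscr{P}+1$, i.e.\ with maximal polarity $\leq \mathscr{P}$, whence $P(m) \leq \mathscr{P}$ by definition of $P(m)$.

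The only subtle point — and the step I would be most careful about — is the applicability of Proposition~\ref{prop:PolarityDeterminesWjF}: it guarantees injectivity of the polar-part map only when we remember coefficients of polarity $\geq m/6$, so strictly the argument produces a form with no terms of polarity $\geq \max(\mathscr{P}+1, \lceil m/6\rceil)$. When $\mathscr{P}+1 \geq \lceil m/6 \rceil$ this is exactly what we want; when $\mathscr{P} + 1 < \lceil m/6\rceil$ one still concludes the form has no terms of polarity $\geq \lceil m/6 \rceil$, so $P(m) \leq \lceil m/6\rceil - 1$, and since $\lceil m/6\rceil \leq P_-(m) \leq P(m)$ this case simply cannot occur (it would contradict the lower bound), so vacuously the stated inequality $P(m) \leq \mathscr{P}$ holds or the hypothesis is never met. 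I would phrase the proof so as to invoke Proposition~\ref{prop:PolarityDeterminesWjF} cleanly and then dispose of this range-of-$\mathscr{P}$ bookkeeping in one sentence.
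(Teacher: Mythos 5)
Your core mechanism is the same as the paper's: treat the polar coefficients as a linear system on the $j(m)$-dimensional space $J_{0,m}$ and use rank--nullity to produce a nonzero form annihilating the most polar terms. However, the deduction in your final paragraph does not close as written. You correctly establish the implication ``if $p_{\mathscr{Q}}(m) < j(m)$ then there is a nonzero $\varphi$ with no terms of polarity $\geq \mathscr{Q}$,'' but you then apply it at $\mathscr{Q}=\mathscr{P}+1$ by citing the hypothesis $j(m) < p_{\mathscr{P}+1}(m)$, which is the \emph{reverse} of the inequality your implication requires. What the argument actually needs is that the number of terms of polarity strictly exceeding $\mathscr{P}$ be at most $j(m)-1$, so that killing all of them still leaves a nonzero form of maximal polarity $\leq\mathscr{P}$. (The displayed hypothesis in the lemma cannot hold as literally written, since $p_{\mathscr{P}}(m)$ is non-increasing in $\mathscr{P}$; the inequality is evidently garbled, and the paper's own one-line proof --- ``use the $j(m)$ basis elements to set $j(m)-1$ of the most polar terms to zero'' --- confirms that the operative condition is a bound of the form $p_{\mathscr{P}+1}(m)\leq j(m)-1$. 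You have reproduced the transposed inequality rather than repairing it, and your argument is only valid once it is repaired.)

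A secondary point: the detour through Proposition~\ref{prop:PolarityDeterminesWjF} is unnecessary and is the source of all your ``range-of-$\mathscr{P}$ bookkeeping.'' Injectivity of the restriction-to-polar-part map is irrelevant to an \emph{upper} bound on $P(m)$: you only need that the kernel of the map $J_{0,m}\to\C^{N}$ remembering the $N$ most polar coefficients has dimension at least $j(m)-N$, which is rank--nullity and nothing more. The paper's proof makes no appeal to that proposition, and dropping it from yours eliminates the entire closing caveat about which values of $\mathscr{P}$ the argument covers.
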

\begin{proof} The polar terms for a given index $m$ form a linear system for the space of weak Jacobi forms of index $m$. We order the polar terms according to their polarity. We may use the $j(m)$ basis elements to set $(j(m)-1)$ of the most polar terms to zero, so that $P(m)$ is bounded above by the value of $\mathscr{P}$ such that $p_{\mathscr{P}}(m) \leq j(m) < p_{\mathscr{P} +1}(m)$.
\end{proof}

Since $P^+(m)$ only involves counting polar terms, it is very easy to compute. We present a scatter plot of its value for $1 \leq m \leq 1000$ in Figure \ref{figure:ScatterPlotUpperBound}. 

We expect that for generic $m$, $P(m)$ will be very close to $P^+(m)$. Equality between $P(m)$ and $P^+(m)$ holds whenever the linear system of polar coefficients with polarity greater than or equal to $P^+(m)$ has maximal rank. We expect the matrix of these polar coefficients to behave like a random matrix, and such matrices generically have maximal rank. Comparing Figure \ref{figure:ScatterPlotUpperBound} with the scatter plot for $P(m)$ for $1 \leq m \leq 61$ in Figure \ref{figure:ScatterPlotP(m)}, we find that $P(m)=P^+(m)$ except at $m=39, 51, 54$, and $58$. A particularly wide gap is found at $m=54$, where $P(m)=9$ but $P^+(m)=25$.
In contrast to $P^+(m)$, we expect $P^-(m)$ to be a weak lower bound.
Numerically, up to $m=1000$ we find that $|P^+(m)-\frac{m}{2}| \leq 2.1016 m^{1/2}$.
\begin{figure}[H]
	\centering
	\includegraphics[]{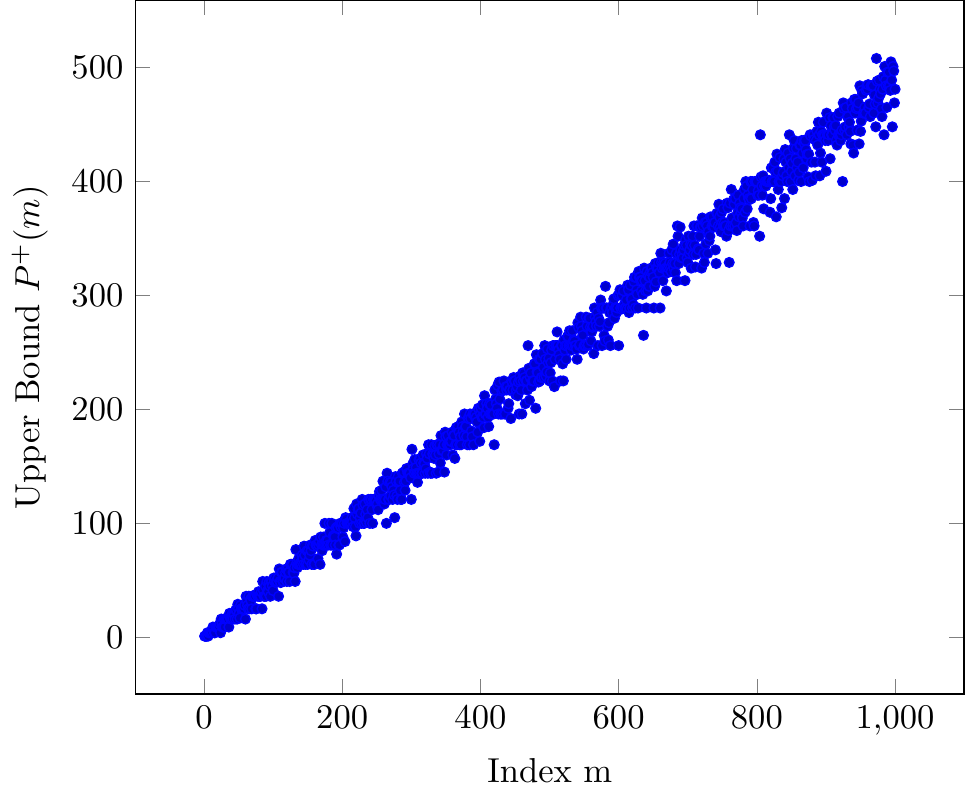}
	\caption{Scatterplot of the upper bound $P^+(m)$ for $P(m)$, where $P^+(m)$ is the polarity such that the number of polar terms of polarity $\geq P^+(m)$ equals $j(m)$.}
	\label{figure:ScatterPlotUpperBound}
\end{figure}

\section{Slow Growth about $y^b$ and Theta Quotients}\label{s:yb}
\subsection{Dimension}
Let us now explore the space of slow growing wJf $\Js^{a,b}_m$ in more detail.
In this section we will focus on slow growing wJf around $q^0 y^b$. For this case, there exists an explicit criterion to determine if a given wJf is slow growing or not:
\begin{proposition} \label{prop:GeneratingFunctions}
	\cite[(4.16)]{Belin:2019jqz} 
	Introducing variables $n_b = n\mod b$, $M=mn^2/b^2+nl/b$, $k= 2(n-n_b)m/b+l$, and writing $f_{0,b}(n,l)=f_{0,b}(M,n_b,k)$, the generating functions 
	 \be
 F_{n_b,k}(\tau) = \sum_M f_{0,b}(M,n_b,k)q^M\ .
 \ee
	 are given by 
	\begin{align}
	F_{n_b,k}(\tau)=\frac{1}{b} \overset{b-1}{\underset{j=0}{\sum}} \chi_{n_b,j}(\tau) e^{-2 \pi i k j/b}  .
	\end{align}
 Here, the $\chi_{n_b,j}(\tau)=q^{mn_b^2/b^2} \varphi(\tau,(n_b \tau+j)/b)$,  $n_b=0,\dots,b-1$ and $j=0,\dots,b-1$, are specializations of $\varphi$. The wJf $\varphi$ is slow growing around $q^0y^b$ iff all $\chi_{n_b,j}(\tau)$ are regular at $q=0$ . 
\end{proposition}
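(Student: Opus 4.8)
The plan is to start from the definition of $f_{0,b}(n,l)$ in \eqref{eq:sumsofcoeffs} with $a=0$, namely $f_{0,b}(n,l)=\sum_{r\in\Z}c(nr,\,l-br)$, and reorganize the double sum (over $r$ and over the Fourier expansion of $\varphi$) into specializations of $\varphi$. First I would write out the specialization $\chi_{n_b,j}(\tau)=q^{mn_b^2/b^2}\varphi(\tau,(n_b\tau+j)/b)$ using the Fourier expansion $\varphi(\tau,z)=\sum_{N,L}c(N,L)q^Ny^L$. Setting $z=(n_b\tau+j)/b$ gives $y^L=e^{2\pi i L(n_b\tau+j)/b}=q^{Ln_b/b}e^{2\pi i Lj/b}$, so
\begin{align}
\chi_{n_b,j}(\tau)=\sum_{N,L}c(N,L)\,q^{N+Ln_b/b+mn_b^2/b^2}\,e^{2\pi i Lj/b}\ .
\end{align}
Then I would average over $j=0,\dots,b-1$ against $e^{-2\pi i kj/b}$: since $\frac1b\sum_{j=0}^{b-1}e^{2\pi i(L-k)j/b}=1$ if $L\equiv k\pmod b$ and $0$ otherwise, the right-hand side $\frac1b\sum_j\chi_{n_b,j}(\tau)e^{-2\pi i kj/b}$ collects exactly the terms with $L\equiv k\pmod b$. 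Writing $L=k-br$ (so $r$ ranges over $\Z$) and substituting back, the $q$-exponent becomes $N+(k-br)n_b/b+mn_b^2/b^2$, which I would check equals $M$ for the appropriate $N$: one uses the dictionary $n_b=n\bmod b$, $k=2(n-n_b)m/b+l$, $M=mn^2/b^2+nl/b$ together with the fact that in $f_{0,b}(n,l)$ the summands are $c(N,L)$ with $N=nr$ and $L=l-br$. The bookkeeping here — verifying that the triple $(M,n_b,k)$ is a well-defined relabeling of $(n,l)$ and that the $q$-power matches — is the one genuinely fiddly step, and is where I would spend the most care; it is essentially the content of \cite[(4.16)]{Belin:2019jqz} and amounts to completing a square in the polarity.

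Having established the identity $F_{n_b,k}(\tau)=\frac1b\sum_{j=0}^{b-1}\chi_{n_b,j}(\tau)e^{-2\pi i kj/b}$ as formal $q$-series, the equivalence "slow growing $\iff$ all $\chi_{n_b,j}$ regular at $q=0$" follows from two observations. For the backward direction: if every $\chi_{n_b,j}$ is regular at $q=0$, then each $F_{n_b,k}$ is a finite linear combination of series with non-negative $q$-powers, hence has only finitely many nonzero coefficients $f_{0,b}(M,n_b,k)$ for each fixed $(n_b,k)$; combined with the fact (noted after \eqref{eq:sumsofcoeffs}) that $f_{0,b}$ is supported on finitely many values of $k$ for the relevant range, one concludes $f_{0,b}(n,l)$ is bounded. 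Actually the cleaner argument is that $\chi_{n_b,j}$ regular at $q=0$ forces it to be a holomorphic Jacobi-type specialization, so its $q$-expansion coefficients are bounded (indeed the $\chi_{n_b,j}$ are, up to the prefactor, genuine modular forms on a congruence subgroup and their coefficients grow only polynomially — but for boundedness of $f_{0,b}$ one needs the stronger input that regularity at the cusp plus weight $0$ makes them constant, or at least that the combination is bounded). For the forward direction (contrapositive): if some $\chi_{n_b,j}$ has a pole at $q=0$, i.e. a term $c(N,L)q^{-\delta}$ with $\delta>0$ coming from a polar term, then Fourier inversion $\chi_{n_b,j}=\sum_k F_{n_b,k}e^{2\pi i kj/b}$ shows at least one $F_{n_b,k}$ has a negative $q$-power, hence $f_{0,b}(M,n_b,k)\neq0$ for some $M<0$; since the polarity assumption in the definition of slow growth controls the leading behavior, one shows these coefficients in fact grow exponentially, contradicting boundedness.

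The main obstacle I anticipate is not the Fourier-averaging algebra but pinning down precisely why regularity of the $\chi_{n_b,j}$ at $q=0$ yields \emph{boundedness} (not merely polynomial growth) of $f_{0,b}(n,l)$, and conversely why a pole forces \emph{exponential} growth rather than mild growth. The resolution should invoke the modularity of $\eta(\tau)h_\mu(\tau)$ established in the proof of Proposition \ref{prop:PolarityDeterminesWjF}, or equivalently the fact that the $\chi_{n_b,j}$ are weight-$0$ (meromorphic) modular forms on $\Gamma(\mathrm{lcm}(24,4mb))$: a weight-$0$ modular form regular at every cusp is constant, so if the specializations are regular at $q=0$ one expects them (after removing the prefactor $q^{mn_b^2/b^2}$) to have bounded coefficients, while a pole at $q=0$ propagates via the modular transformation to exponential growth of Fourier coefficients in the standard Rademacher/circle-method sense, matching the asymptotic $|c(n,l)|\sim\exp\pi\sqrt{\cdots}$ quoted in the introduction. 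Since this proposition is quoted from \cite{Belin:2019jqz}, I would at minimum reproduce the algebraic identity in full and cite that reference for the analytic equivalence, flagging the modular-forms input explicitly.
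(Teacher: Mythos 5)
Your derivation of the generating-function identity by Fourier inversion over $j$ is exactly the computation behind \cite[(4.16)]{Belin:2019jqz}, and your identification of the key analytic input --- that by \cite[Theorem 1.3]{MR781735} the $\chi_{n_b,j}$ are weight-$0$ modular forms on a congruence subgroup --- is precisely the paper's own (very brief) justification. The paper's version of the boundedness step is cleaner than the one you were reaching for: regularity forces the $\chi_{n_b,j}$ to be \emph{constant}, so each $F_{n_b,k}$ has a finite $q$-expansion, and the $f_{0,b}(n,l)$ take only finitely many values, supported on $n=0$ or $mn+bl=0$. Boundedness is then manifest, with no need to discuss polynomial versus bounded coefficient growth.

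The one place where you hedge (``one expects them to have bounded coefficients'') is the one step that needs an input you did not name. A single weight-$0$ form regular at $q=0$, i.e.\ at the cusp $i\infty$, need not be constant; it must be regular at \emph{all} cusps. The bridge is that the collection $\{\chi_{n_b,j}\}_{0\le n_b,j\le b-1}$ is permuted (up to phases and the $q$-prefactor) by the modular group, because the torsion points $(n_b\tau+j)/b$ are; hence requiring regularity at $q=0$ of \emph{all} of them simultaneously is equivalent to regularity of each one at every cusp, and constancy follows. This is why the criterion quantifies over all pairs $(n_b,j)$ rather than a single specialization. Your forward direction --- a pole at a cusp of a meromorphic modular form forces exponential growth of Fourier coefficients --- is the standard circle-method fact and is the argument used in \cite{Belin:2019jqz}; with the cusp-closure point made explicit, your proof is complete and follows the same route as the paper.
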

The proof of this is based on the fact that by \cite[Theorem 1.3]{MR781735}, the $\chi_{n_b,j}(\tau)$ are modular forms of some congruence subgroup. It follows that if all the $\chi_{n_b,j}(\tau)$ are regular at $q=0$, they are all constant, which implies that $\varphi$ is slow growing. For such slow growing forms, one can give explicit expressions for the $f_{0,b}(n,l)$: they vanish unless $mn+bl=0$ or $n=0$, and only take a finite number of different values otherwise, thus being manifestly bounded. In practice, the following corollary can provide a more useful test for slow growth (`$\alpha$-test'):

\begin{corollary} \label{cor:AlphaTest} \cite[(5.2)]{Belin:2019jqz}
Irregularity of $\chi_{n_b,j}(\tau)$ at $q=0$ comes from the presence of polar terms $q^n y^l$ in the wJf $\varphi$ for which the value 
\begin{align}
    \alpha:=\underset{j=0,\dots,b-1}{\max} \left[-m(\frac{j}{b}-\frac{l}{2m})^2-\frac{1}{4m}(4mn-l^2)\right]
\end{align}
is such that $\alpha >0$. Thus, the wJf $\varphi$ is slow growing about $q^0y^b$ iff $\varphi$ has no polar term $q^ny^l$ with $\alpha>0$. 
\end{corollary}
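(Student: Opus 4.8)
The plan is to derive Corollary~\ref{cor:AlphaTest} from Proposition~\ref{prop:GeneratingFunctions} by a direct analysis of when the specializations $\chi_{n_b,j}(\tau)$ fail to be regular at $q=0$. By Proposition~\ref{prop:GeneratingFunctions} we know that $\varphi$ is slow growing about $q^0y^b$ if and only if every $\chi_{n_b,j}(\tau)=q^{mn_b^2/b^2}\varphi(\tau,(n_b\tau+j)/b)$ is regular at $q=0$, so the whole task is to translate this regularity condition into a condition on the polar terms of $\varphi$. First I would substitute the Fourier expansion $\varphi(\tau,z)=\sum_{n,l}c(n,l)q^ny^l$ into the specialization $\chi_{n_b,j}$, setting $z=(n_b\tau+j)/b$, so that $y^l=e^{2\pi i l z}=q^{n_b l/b}e^{2\pi i l j/b}$. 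Collecting the overall prefactor $q^{mn_b^2/b^2}$, the term $c(n,l)q^ny^l$ contributes a monomial with $q$-exponent
\begin{align}
n+\frac{n_b l}{b}+\frac{mn_b^2}{b^2}=n+\frac{m}{b^2}\left(n_b+\frac{bl}{2m}\right)^2-\frac{l^2}{4m}=\frac{4mn-l^2}{4m}+\frac{m}{b^2}\left(n_b+\frac{bl}{2m}\right)^2.
\end{align}

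Next I would observe that $\chi_{n_b,j}$ is irregular at $q=0$ precisely when some term has negative $q$-exponent, i.e.\ when there exist $n,l$ with $c(n,l)\neq 0$ and the displayed exponent is negative. Since the prefactor of $j$ only enters through the phase $e^{2\pi i lj/b}$ and not the exponent, irregularity of any one $\chi_{n_b,j}$ for fixed $n_b$ is governed by the same exponent condition for all $j$; but $n_b$ itself ranges over $0,\dots,b-1$, and the natural way to package ``there exists a bad $n_b$'' is to take the maximum over the relevant residues. Writing $\frac{m}{b^2}(n_b+\frac{bl}{2m})^2 = m(\frac{n_b}{b}+\frac{l}{2m})^2$ and noting that as $n_b$ runs over residues mod $b$ the quantity $\frac{n_b}{b}$ effectively runs over the same set as $-\frac{j}{b}$ for $j=0,\dots,b-1$, the exponent becomes $-\alpha$ with
\begin{align}
\alpha=-m\left(\frac{j}{b}-\frac{l}{2m}\right)^2+\frac{l^2-4mn}{4m},
\end{align}
maximized over $j=0,\dots,b-1$. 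Thus some $\chi_{n_b,j}$ is irregular iff there is a polar term (necessarily polar, since a negative $q$-exponent forces $l^2-4mn>0$) with $\alpha>0$, which is exactly the statement of the corollary. One subtlety I would be careful about is the identification of the index set: the sum in the theta decomposition and in Proposition~\ref{prop:GeneratingFunctions} runs over $n_b=0,\dots,b-1$, and one must check that taking the maximum over $j$ in this range genuinely captures all the specializations $\chi_{n_b,j}$ that can be irregular — i.e.\ that replacing $n_b/b$ by $j/b-l/(2m)$ modulo the appropriate lattice does not miss or double-count cases.

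The main obstacle I expect is precisely this bookkeeping step: making rigorous the claim that ``irregularity comes \emph{only} from polar terms with $\alpha>0$'', as opposed to $\alpha\geq 0$, and confirming that nonpolar terms (where $l^2-4mn\le 0$) can never produce a negative exponent no matter what $n_b$ or $j$ is. For the latter, since $m(\frac{j}{b}-\frac{l}{2m})^2\ge 0$ always, a negative total exponent $-\alpha<0$ requires $\frac{l^2-4mn}{4m}>m(\frac{j}{b}-\frac{l}{2m})^2\ge 0$, so indeed $l^2-4mn>0$; this also shows $\alpha>0$ (strict) is the right condition, since $\alpha=0$ gives a $q^0$ contribution which is regular. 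The only remaining care is that the maximum over $j\in\{0,\dots,b-1\}$ is attained and that this finite set of specializations is exhaustive, which follows from Proposition~\ref{prop:GeneratingFunctions} directly. Everything else is the elementary algebra of completing the square already carried out above, so the proof is short once the translation is set up correctly.
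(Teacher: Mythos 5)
The paper itself offers no proof of this corollary (it is imported from \cite{Belin:2019jqz}), so your derivation from Proposition~\ref{prop:GeneratingFunctions} is the natural route, and your central computation is correct: substituting $z=(n_b\tau+j)/b$ and completing the square gives the exponent $\frac{4mn-l^2}{4m}+m\bigl(\frac{n_b}{b}+\frac{l}{2m}\bigr)^2$ for the monomial $c(n,l)q^ny^l$, from which it follows that non-polar terms never produce negative powers and that $\alpha>0$ is the right strict inequality. However, the step where you match this exponent to the bracket defining $\alpha$ is wrong as written. Expanding, the bracket equals $-\bigl(n-\frac{jl}{b}+\frac{mj^2}{b^2}\bigr)$, i.e.\ minus the exponent of the term $q^ny^{-l}$ in $\chi_{j,s}$, not of $q^ny^l$. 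Your claim that ``$n_b/b$ effectively runs over the same set as $-j/b$'' does not repair this: $\{0,\tfrac1b,\dots,\tfrac{b-1}{b}\}$ and $\{0,-\tfrac1b,\dots,-\tfrac{b-1}{b}\}$ agree only modulo $1$, and $x\mapsto m(x+\tfrac{l}{2m})^2$ is not periodic in $x$. The identification genuinely requires invoking either the even-weight symmetry $c(n,l)=c(n,-l)$ or the fact that $c(n,l)$ depends only on $4mn-l^2$ and $l\bmod 2m$ (so that shifting $n_b$ by $b$ can be traded for shifting $l$ by a multiple of $2m$), together with a normalization such as $0\le l\le m$ to guarantee that the maximizing integer $j$ actually lies in $\{0,\dots,b-1\}$. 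None of this appears in your argument.

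The second, more serious gap is the converse direction of the ``iff''. Your exponent analysis proves only that a negative power of $q$ in some $\chi_{n_b,j}$ \emph{can only come from} a polar term with $\alpha>0$; to conclude that $\varphi$ fails to be slow growing when such a term exists, you must show that the coefficient of the most negative power does not vanish by cancellation among the several pairs $(n',l')$ that can contribute to the same exponent with phases $e^{2\pi i l'j/b}$. This requires an argument — e.g.\ take the minimal exponent over all $n_b$, group contributing terms by $l'\bmod b$, and use finite Fourier inversion over $j$ to reduce total cancellation to the vanishing of each group sum, which one then rules out at extremal polarity. You flag $\alpha>0$ versus $\alpha\ge0$ and the non-polar terms as the delicate points, but those are the easy parts; the cancellation issue is the one you miss entirely. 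Finally, note that the ``iff'' as stated also quietly needs the clause from Definition~1 that $\varphi$ has no terms of polarity exceeding $b^2$: for $b=1$ one computes $\alpha=-n\le0$ for every term of every wJf, so the $\alpha$-test alone cannot be equivalent to slow growth in the sense of the definition.
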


We can apply this $\alpha$-test to compute the dimension of the space of slow growing wJf $\dim \Js^{0,b}_m$. Checking regularity is straightforward, but quickly becomes computationally expensive for the reason explained in section~\ref{s:polar}. 
Using the computationally efficient basis (\ref{phi01}-\ref{phi03}), we were able compute this dimension for all values of $b$ up to $m=61$.
We list the results in table~\ref{table:ybTable} and plot it in figure~\ref{f:slowGrowth}. Note that the $\alpha$-test shows that any term $y^b$ with $b>\sqrt{m}$ leads to fast growth, so that we only need to test up to that value. 
Further, we find that for all $m,b$ listed with $\dim \Js^{0,b}_m \neq 0$, except for the case $m=54,b=4$, the corresponding affine space $\hat\Js^{0,b}_m$ is nonempty. The sole exception, $m=54,b=4$, has $\Js^{0,4}_{54}=1$ and  $\hat\Js^{0,4}_{54}=\emptyset$.
In view of conjecture~\ref{conjMain}, we note that for every $m$ in the table there is a $b$ with a non-vanishing slow growing wJf. 

\begin{figure}[H]
	\centering
\includegraphics[]{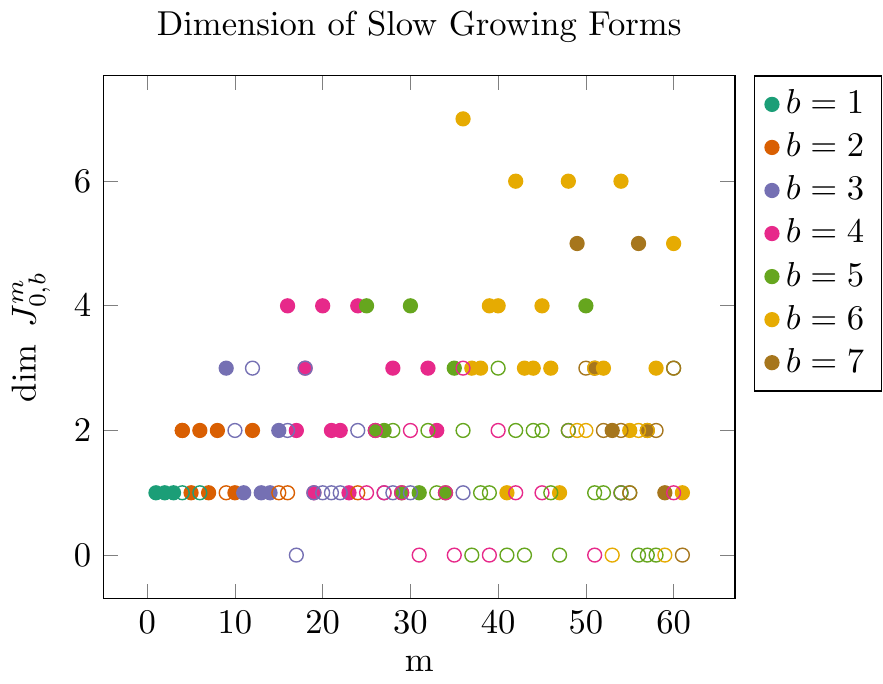}
\caption{$\dim \Js^m_{0,b}$. For each index $m$, we denote different values of $b$ by empty circles of different color. The solid circle corresponds to  $\max_b \dim \Js^m_{0,b}$.} 
\label{f:slowGrowth}
\end{figure}

\begin{table}
\centering
\begin{tabular}{ccc |c cc |c cc |ccc |ccc}
m  & b & dim $\Js^m_{0,b}$ & m  & b & dim $\Js^m_{0,b}$ & m  & b & dim $\Js^m_{0,b}$ & m  & b & dim $\Js^m_{0,b}$& m  & b & dim $\Js^m_{0,b}$ \\
1  & 1 & 1   & 19 & 3 & 1   & 31 & 5 & 1   & 42 & 6 & 6   & 53 & 6 & 0   \\
2  & 1 & 1   & 19 & 4 & 1   & 32 & 4 & 3   & 43 & 5 & 0   & 53 & 7 & 2   \\
3  & 1 & 1   & 20 & 3 & 1   & 32 & 5 & 2   & 43 & 6 & 3   & 54 & 3 & 1   \\
4  & 1 & 1   & 20 & 4 & 4   & 33 & 4 & 2   & 44 & 5 & 2   & 54 & 4 & 1   \\
4  & 2 & 2   & 21 & 3 & 1   & 33 & 5 & 1   & 44 & 6 & 3   & 54 & 5 & 1   \\
5  & 2 & 1   & 21 & 4 & 2   & 34 & 4 & 1   & 45 & 4 & 1   & 54 & 6 & 6   \\
6  & 1 & 1   & 22 & 3 & 1   & 34 & 5 & 1   & 45 & 5 & 2   & 54 & 7 & 2   \\
6  & 2 & 2   & 22 & 4 & 2   & 35 & 4 & 0   & 45 & 6 & 4   & 55 & 5 & 1   \\
7  & 2 & 1   & 23 & 4 & 1   & 35 & 5 & 3   & 46 & 5 & 1   & 55 & 6 & 2   \\
8  & 2 & 2   & 24 & 2 & 1   & 36 & 3 & 1   & 46 & 6 & 3   & 55 & 7 & 1   \\
9  & 2 & 1   & 24 & 3 & 2   & 36 & 4 & 3   & 47 & 5 & 0   & 56 & 5 & 0   \\
9  & 3 & 3   & 24 & 4 & 4   & 36 & 5 & 2   & 47 & 6 & 1   & 56 & 6 & 2   \\
10 & 2 & 1   & 25 & 4 & 1   & 36 & 6 & 7   & 48 & 4 & 2   & 56 & 7 & 5   \\
10 & 3 & 2   & 25 & 5 & 4   & 37 & 5 & 0   & 48 & 5 & 2   & 57 & 5 & 0   \\
11 & 3 & 1   & 26 & 4 & 2   & 37 & 6 & 3   & 48 & 6 & 6   & 57 & 6 & 2   \\
12 & 2 & 2   & 26 & 5 & 2   & 38 & 5 & 1   & 49 & 6 & 2   & 57 & 7 & 2   \\
12 & 3 & 3   & 27 & 3 & 1   & 38 & 6 & 3   & 49 & 7 & 5   & 58 & 5 & 0   \\
13 & 3 & 1   & 27 & 4 & 1   & 39 & 4 & 0   & 50 & 5 & 4   & 58 & 6 & 3   \\
14 & 3 & 1   & 27 & 5 & 2   & 39 & 5 & 1   & 50 & 6 & 2   & 58 & 7 & 2   \\
15 & 2 & 1   & 28 & 3 & 1   & 39 & 6 & 4   & 50 & 7 & 3   & 59 & 6 & 0   \\
15 & 3 & 2   & 28 & 4 & 3   & 40 & 4 & 2   & 51 & 4 & 0   & 59 & 7 & 1   \\
16 & 2 & 1   & 28 & 5 & 2   & 40 & 5 & 3   & 51 & 5 & 1   & 60 & 4 & 1   \\
16 & 2 & 2   & 29 & 4 & 1   & 40 & 6 & 4   & 51 & 6 & 3   & 60 & 5 & 3   \\
16 & 4 & 4   & 29 & 5 & 1   & 41 & 5 & 0   & 51 & 7 & 3   & 60 & 6 & 5   \\
17 & 3 & 0   & 30 & 3 & 1   & 41 & 6 & 1   & 52 & 5 & 1   & 60 & 7 & 3   \\
17 & 4 & 2   & 30 & 4 & 2   & 42 & 4 & 1   & 52 & 6 & 3   & 61 & 6 & 1   \\
18 & 3 & 3   & 30 & 5 & 4   & 42 & 5 & 2   & 52 & 7 & 2   & 61 & 7 & 0   \\
18 & 4 & 3   & 31 & 4 & 0   &    &   &     &    &   &     &    &   &    
\end{tabular}
\caption{Dimension of the Space of Weak Jacobi Forms of Weight 0 and Index $m$ that are Slow Growing About Their Most Polar $y^b$ Term}
\label{table:ybTable}
\end{table}

\subsection{Lower bound}
Since for larger $m$ constructing the explicit space of slow growing wJf is computationally expensive, we will instead use the same strategy as in section~\ref{s:polar}: we will give a lower bound on the dimension by computing the size of a constraint matrix.

\begin{proposition} For index $m$ and integer $b$, let $\rho(m,b)$ be the number of polar terms $q^n y^l$ with either $\alpha >0$ or $4mn-l^2<-b^2$. The dimension of $\Js^{0,b}_m$ is bounded below by $j_-^{0,b}(m):=j(m) - \rho(m,b)$. \end{proposition}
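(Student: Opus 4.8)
The plan is to exhibit $\Js^{0,b}_m$ as the common kernel of an explicit finite collection of linear functionals on the finite-dimensional space $J_{0,m}$, and then invoke rank--nullity. First I would unpack the definition of slow growth about $q^0y^b$: a weight-$0$ index-$m$ wJf $\varphi$ lies in $\Js^{0,b}_m$ exactly when (i) it has no term of polarity greater than $b^2-4m\cdot 0 = b^2$, i.e.\ $c(n,l)=0$ whenever $l^2-4mn>b^2$ (equivalently $4mn-l^2<-b^2$), and (ii) $f_{0,b}(n,l)$ is bounded in $n$ and $l$. By Corollary~\ref{cor:AlphaTest}, condition (ii) holds if and only if $c(n,l)=0$ for every polar term $q^ny^l$ with $\alpha>0$. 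Combining (i) and (ii), membership in $\Js^{0,b}_m$ is equivalent to the single requirement that $c(n,l)=0$ for every $q^ny^l$ in the set $S$ of polar terms satisfying $\alpha>0$ or $4mn-l^2<-b^2$; by definition $|S|=\rho(m,b)$.

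Next I would observe that $S$ is finite: every polar term of a wJf of index $m$ satisfies $n\ge 0$ and $0<l^2-4mn\le m^2$, which leaves only finitely many admissible $(n,l)$, so $\rho(m,b)<\infty$. For each $(n,l)\in S$ the evaluation $\ell_{n,l}\colon J_{0,m}\to\C$, $\varphi\mapsto c(n,l)$, is a linear functional, and by the previous paragraph $\Js^{0,b}_m=\bigcap_{(n,l)\in S}\ker\ell_{n,l}$. Choosing a basis of $J_{0,m}$ (for instance the monomials in $\phi_{0,1},\phi_{0,2},\phi_{0,3}$) and expressing the $\ell_{n,l}$, $(n,l)\in S$, against it yields a constraint matrix $A$ with $\rho(m,b)$ rows and $j(m)$ columns whose kernel is exactly $\Js^{0,b}_m$. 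The bound is then immediate: $\mathrm{rank}\,A\le\rho(m,b)$ since $A$ has $\rho(m,b)$ rows, so rank--nullity gives $\dim\Js^{0,b}_m=\dim\ker A=j(m)-\mathrm{rank}\,A\ge j(m)-\rho(m,b)=j^{0,b}_-(m)$.

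I do not expect a genuine obstacle here; the content is linear algebra once Corollary~\ref{cor:AlphaTest} is in hand. The two points that need care are: (a) checking that the two clauses in the definition of slow growth are jointly encoded by the finitely many coefficient-vanishing conditions indexed by $S$ --- in particular that the a priori analytic boundedness of $f_{0,b}$ has already been converted by Corollary~\ref{cor:AlphaTest} into algebraic conditions on the polar coefficients, so that no further constraints are hidden; and (b) noting that one only gets an inequality because the functionals $\{\ell_{n,l}\}_{(n,l)\in S}$ need not be independent --- equality holds precisely when $A$ has full rank $\rho(m,b)$, which, as remarked for $P^+(m)$ in Section~\ref{s:polar}, one expects to be the generic situation.
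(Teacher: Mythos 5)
Your proposal is correct and follows essentially the same route as the paper: reduce membership in $\Js^{0,b}_m$ to the vanishing of the $\rho(m,b)$ polar coefficients with $\alpha>0$ or discriminant below $-b^2$ (via Corollary~\ref{cor:AlphaTest}), encode this as a linear system on $J_{0,m}$, and apply rank--nullity. Your write-up is in fact slightly more careful than the paper's in making explicit that the boundedness condition has been fully converted into coefficient constraints and in orienting the constraint matrix so that rank--nullity applies directly.
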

\begin{proof}
	$\Js^{0,b}_m$ is the space of wJfs that have no polar terms $q^ny^l$ with either $\alpha>0$ or discriminant $4mn-l^2<-b^2$. This can be encoded as a linear system, with respect to the polar coefficients of a basis of $J_{0,m}$. Indeed, given a basis of $J_{0,m}$, let $A$ be the matrix where the $j$-th row is the polar coefficients $c(n,l)$ with $4mn-l^2<-b^2$ and the polar coefficients of terms $q^ny^l$ with $\alpha >0$ of the $j$-th basis element. The linear system is
	\begin{align}
	Ax=0
	\end{align}
	
	The space of solutions to the linear system above is $\Js^{0,b}_m$. By rank-nullity, $\text{null}(A) \geq j(m)-\rho(m,b)$.  

\end{proof}

Since this bound only relies on counting the number of terms, we can easily evaluate it for higher values of $m$. In figure~\ref{f:boundSlow}, for all $m$ up to 500 we plot 
\be
j_-(m) := \max_b j_-^{0,b}(m)\ ,
\ee
the maximum over all $b$ of the dimension $\Js^{0,b}_m$. If this lower bound were optimal, then figure~\ref{f:boundSlow} would be problematic for conjecture~\ref{conjMain}: for instance, it would predict no slow growing forms about any $y^b$ for $m=41,47,59$. However, for all such cases we checked, the bound is actually not optimal, as can be seen by comparing to table~\ref{table:ybTable}. Our conclusion is that the bound probably becomes less and less optimal as $m$ grows.

\begin{figure}
	\centering
\includegraphics[]{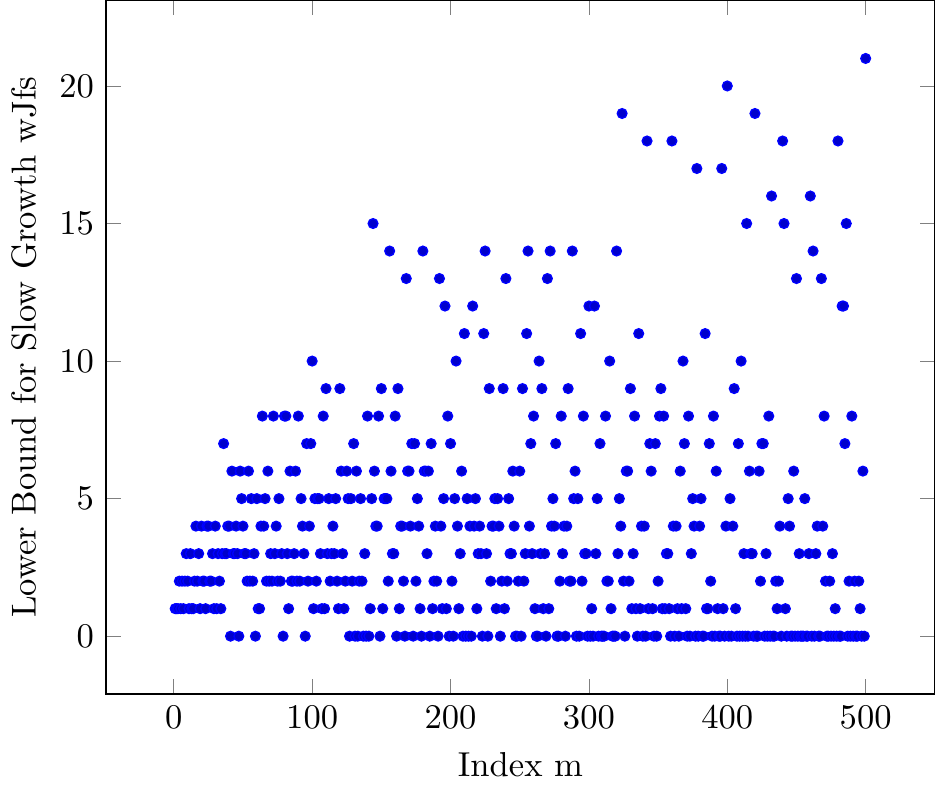}
\caption{Scatterplot of the lower bound $j_-(m)$ for the dimension of weak Jacobi forms that are slow growth about their most polar term $y^b$, for any $b$.
\label{f:boundSlow}}
\end{figure}

\subsection{Theta quotients}
Since the lower bound $j_-(m)$ is probably not optimal for large values of $m$, we will also pursue a different strategy, namely finding explicit families of slow growing wJf.
For this we introduce the theta function 
\begin{align} \label{eq:GritsenkoTheta}
\theta_1(\tau,z)=-q^{1/8}y^{-1/2}\overset{\infty}{\underset{n=1}{\prod}}(1-q^n)(1-q^{n-1}y)(1-q^ny^{-1}).
\end{align}
The theta function $\theta_1(\tau,z)$ is a  weak Jacobi form of weight $\frac{1}{2}$ and index $\frac{1}{2}$ with multiplier system $\nu_\eta^3 \times \nu_H$, where $\nu_\eta$ is the multiplier system of the $\eta$ function (see for example \cite[p.15]{MR2766155}).  To obtain a weak Jacobi form of integer index and trivial multiplier system, we will take quotients of $\theta_1(\tau,\alpha z)$ where $\alpha$ is some scaling factor. More precisely, we consider
\begin{align*}
\overset{N}{\underset{j=1}{\prod}} \frac{\theta_1(\tau, n_1 z)\theta_1(\tau,n_2 z) \cdots \theta_1(\tau,n_N z)}{\theta_1(\tau, m_1 z)\theta_1(\tau,m_2 z) \cdots \theta_1(\tau,m_N z)}.
\end{align*}
Since there is an equal number of thetas in the numerator and denominator, the quotient has weight 0, and the multiplier systems cancel. Its index is $\sum_j \frac12 (n_j^2-m_j^2)$. In general however the quotients has poles, unless the zeros of the denominators are cancelled by zeros in the numerator. For a single quotient for instance,  $\frac{\theta_1(\tau,kz)}{\theta_1(\tau,k'z)}$ is holomorphic so long as $\{ z = \frac{1}{k'}(\lambda \tau +\mu) \mid \lambda, \mu \in \mathbb{Z} \} \subset \{z = \frac{1}{k}(\lambda \tau + \mu) \mid \lambda, \mu \in \mathbb{Z} \},$ i.e. $k' \mid k$. To obtain genuine weak Jacobi forms it is thus necessary have similar cancellations.

\subsubsection{Single quotient}
Let us first discuss under what conditions single theta quotient
\begin{align}  \label{eq:singlethetaquotient}
\varphi_{0,\frac{1}{2}(\alpha^2-\beta^2)}(\tau,z)=\frac{\theta_1(\tau,\alpha z)}{\theta_1(\tau,\beta z)}
\end{align}
have slow growth about the most polar term $y^{b}=y^{\frac{1}{2}(\alpha-\beta)}$. The result is given in the following proposition:
\begin{proposition} \label{prop:ClassificationSingleTheta}
	The single theta quotients that have slow growing $f_{0,b}(n,l)$ about their most polar term $y^b$ are given by quotients of the form
	\begin{align}
	\frac{\theta_1(\tau,(k+1)\beta z)}{\theta_1(\tau,\beta z)},
	\end{align}
	for $k$ even or $\beta$ even. For such a quotient, the most polar term is $y^{k\beta/2}$ and the index is $\frac{\beta^2 k(k+2)}{2}$.
\end{proposition}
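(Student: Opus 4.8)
The plan is to split the proposition into two parts. First I would determine which of the single theta quotients in~\eqref{eq:singlethetaquotient} are actually weak Jacobi forms of weight $0$ and \emph{integral} index; it will turn out that this requirement alone already cuts the list down to the family displayed in the proposition. Then I would verify slow growth for every form on that list using the criterion of Proposition~\ref{prop:GeneratingFunctions}, which is what the real work reduces to.

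For the classification step I would normalize $\alpha>\beta>0$ (harmless, since $\theta_1(\tau,-z)=-\theta_1(\tau,z)$ only changes the quotient by a sign). The zeros of $\theta_1(\tau,\gamma z)$ in $z$ are simple and lie on the lattice $\tfrac1\gamma(\Z\tau+\Z)$, so $\theta_1(\tau,\alpha z)/\theta_1(\tau,\beta z)$ is holomorphic exactly when $\tfrac1\beta(\Z\tau+\Z)\subseteq\tfrac1\alpha(\Z\tau+\Z)$, that is $\beta\mid\alpha$; write $\alpha=(k+1)\beta$ with $k\ge1$. The weight is $0$, and in the quotient the $\mathrm{SL}_2(\Z)$-part $\nu_\eta^3$ of the multiplier of $\theta_1$ cancels because rescaling the elliptic variable does not affect it, so the quotient is a genuine weak Jacobi form precisely when the remaining elliptic multiplier is trivial, which happens iff the index $\tfrac12(\alpha^2-\beta^2)=\tfrac12\beta^2k(k+2)$ is an integer, i.e.\ iff $k$ is even or $\beta$ is even. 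Reading off the $q^0$-term of the product expansion of the quotient, namely $y^{(\beta-\alpha)/2}\frac{1-y^\alpha}{1-y^\beta}=\sum_{i=0}^{k}y^{i\beta-k\beta/2}$, then identifies the most polar term as $y^{k\beta/2}$ and records the index $\tfrac12\beta^2k(k+2)$. This already gives one inclusion of the proposition for free: a slow growing single theta quotient is in particular a weak Jacobi form of this shape.

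For the slow-growth step, fix $\varphi=\theta_1(\tau,(k+1)\beta z)/\theta_1(\tau,\beta z)$ from that list and put $b=k\beta/2$, $m=\tfrac12\beta^2k(k+2)$; I would record up front the two numerical facts $\alpha-\beta=2b$ and $m/b^2=2+4/k$. By Proposition~\ref{prop:GeneratingFunctions} it suffices to show that each specialization $\chi_{n_b,j}(\tau)=q^{mn_b^2/b^2}\varphi\bigl(\tau,(n_b\tau+j)/b\bigr)$ is regular at $q=0$. The key point is that at $z=(n_b\tau+j)/b$ the difference of the two theta arguments is $\alpha z-\beta z=(\alpha-\beta)z=2(n_b\tau+j)=(2n_b)\tau+(2j)$, which lies in the period lattice $\Z\tau+\Z$; hence the elliptic quasi-periodicity $\theta_1(\tau,w+\lambda\tau+\mu)=(-1)^{\lambda+\mu}e^{-\pi i(\lambda^2\tau+2\lambda w)}\theta_1(\tau,w)$, applied with $w=\beta z$, $\lambda=2n_b$, $\mu=2j$, collapses the quotient to the single exponential $\varphi\bigl(\tau,(n_b\tau+j)/b\bigr)=e^{-\pi i(4n_b^2\tau+4n_b\beta z)}$. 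Substituting $\beta z=2(n_b\tau+j)/k$ rewrites this as $q^{-2n_b^2-4n_b^2/k}e^{-8\pi i n_b j/k}$, and because $m/b^2=2+4/k$ the normalizing factor $q^{mn_b^2/b^2}$ cancels the negative $q$-power exactly, leaving $\chi_{n_b,j}(\tau)=e^{-8\pi i n_b j/k}$, a constant. Constants are regular at $q=0$, so Proposition~\ref{prop:GeneratingFunctions} gives that $\varphi$ is slow growing about $q^0y^b$; since that statement is an ``iff'', it also delivers the absence of terms of polarity exceeding $b^2$, so no separate polarity estimate is needed. Together with the classification step this proves the proposition.

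I expect the only genuinely delicate point to be the multiplier bookkeeping in the classification step — one has to be sure that the rescaling $z\mapsto\gamma z$ leaves $\nu_\eta^3$ untouched while the elliptic part of the multiplier is trivial precisely for integral index. The other place that needs care, though it is purely arithmetic, is verifying the coincidence $m/b^2=2+4/k$, which is exactly what makes the $q$-power produced by the quasi-periodicity cancel against the prefactor in the definition of $\chi_{n_b,j}$. Everything else is either straightforward bookkeeping with the product expansion of $\theta_1$ or a direct appeal to Proposition~\ref{prop:GeneratingFunctions}, which we are entitled to assume.
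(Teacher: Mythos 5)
Your classification step is the same as the paper's (holomorphy forces $\beta\mid\alpha$, integrality of the index $\tfrac12\beta^2k(k+2)$ forces $k$ or $\beta$ even), so the real point of comparison is the slow-growth verification, and there you take a genuinely different and in fact slicker route. The paper works from the product formula \eqref{eq:GritsenkoTheta}: it isolates the lowest power of $q$ in each specialization as in \eqref{eq:LowestPowerThetaQuotient}, reduces regularity to the inequality \eqref{eq:ConditionSlowGrowthSingleQuotient}, and then shows that expression vanishes identically using $\lfloor(k+1)\tfrac{2r}{k}\rfloor=2r+\lfloor\tfrac{2r}{k}\rfloor$. You instead notice that at $z_0=(n_b\tau+j)/b$ the difference of the two theta arguments, $(\alpha-\beta)z_0=2n_b\tau+2j$, is a period, so quasi-periodicity collapses the quotient to an explicit exponential and each $\chi_{n_b,j}$ to an explicit constant; this buys you more than the paper's argument (the actual values of the specializations, hence closed forms for the $f_{0,b}(n,l)$ via Proposition~\ref{prop:GeneratingFunctions}, rather than just nonnegativity of a leading exponent). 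One caveat you should add: when $k\mid 2n_b$ and $k\mid 2j$, the point $\beta z_0=\tfrac{2n_b}{k}\tau+\tfrac{2j}{k}$ itself lies in the period lattice, so numerator and denominator both vanish and your pointwise application of quasi-periodicity is a $0/0$ evaluation. Since the zeros of $\theta_1$ in the elliptic variable are simple, letting $z\to z_0$ shows the quotient there equals $\tfrac{\alpha}{\beta}=k+1$ times your exponential, so $\chi_{n_b,j}$ is still a constant and the conclusion stands, but your formula $\chi_{n_b,j}=e^{-8\pi i n_b j/k}$ needs that correction factor in the degenerate cases (the paper flags the analogous degeneration only at $n_b=0$, where it argues regularity directly from the Fourier--Jacobi expansion).
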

\begin{proof}
	For the quotient \eqref{eq:singlethetaquotient} to be holomorphic on $\mathbb{H} \times \mathbb{C}$, we must have $\beta \mid \alpha$, so we may write $\alpha = (k+1)\beta$. To obtain an integral weight $t=\frac{\beta^2(k^2+2k)}{2}$, we must have that $\beta$ is even or $k$ is even. 
	
	Next let us discuss the regularity at $q=0$ of the most general single theta quotient (\ref{eq:singlethetaquotient}). From Proposition \ref{prop:GeneratingFunctions}, we know slow growth about $y^{b}$ is equivalent to regularity at $q=0$ of the collection of modular forms $\chi_{r,s}(\tau)$, $0 \leq r,s \leq b-1$, coming from specializations of the weak Jacobi form $\varphi_{0,\frac{1}{2}(\alpha^2-\beta^2)}$.

The specializations $\chi_{r,s}(\tau)$ of the quotient \eqref{eq:singlethetaquotient} are
\begin{align} \label{eq:SpecializedSingleThetaQuotient}
\begin{split}
\chi_{r,s}(\tau)&=\frac{q^{\frac{\alpha^2}{2}\frac{r^2}{b^2}}\theta_1(\tau,\alpha(\frac{r}{b} \tau + \frac{s}{b}))}{q^{\frac{\beta^2}{2}\frac{r^2}{b^2}}\theta_1(\tau,\beta(\frac{r}{b} \tau + \frac{s}{b}))}\\  
&=\frac{-q^{\frac{\alpha^2}{2}\frac{r^2}{b^2}}q^{1/8}q^{-\frac{\alpha}{2}\frac{r}{b}} e^{-2 \pi i \frac{\alpha}{2}\frac{s}{b}}}{-q^{\frac{\beta^2}{2}\frac{r^2}{b^2}}q^{1/8}q^{-\frac{\beta}{2}\frac{r}{b}} e^{-2 \pi i \frac{\beta}{2}\frac{s}{b}}  } \\ & \hspace*{1.5cm} \times
\frac{\overset{\infty}{\underset{n=1}{\prod}}(1-q^n)(1-q^{n-1}q^{\alpha \frac{r}{b}}e^{2 \pi i \alpha \frac{s}{b}})(1-q^n q^{-\alpha\frac{r}{b}}e^{-2 \pi i \alpha \frac{s}{b}})}{\overset{\infty}{\underset{n=1}{\prod}}(1-q^n)(1-q^{n-1}q^{\beta \frac{r}{b}}e^{2 \pi i \beta \frac{s}{b}})(1-q^n q^{-\beta\frac{r}{b}}e^{-2 \pi i \beta \frac{s}{b}})}.
\end{split}
\end{align}

Regularity of the specialized theta quotient \eqref{eq:SpecializedSingleThetaQuotient} at $q=0$ is equivalent to it having only nonnegative powers of $q$ in its Fourier expansion. Thus, we only need compare the lowest powers of $q$ in the numerator and denominator: the lowest power of $q$ in the numerator of $\chi_{r,s}(\tau)$ is greater than or equal to the power of $q$ in the denominator if and only if $\chi_{r,s}(\tau)$ is regular at $q=0$. Note for $\chi_{0,s}(\tau)$, this approach does not work as naive computation leads to an undefined quotient. However, this case is easily managed as the Fourier-Jacobi expansion of the form $\varphi_{0,\frac{1}{2}(\alpha^2-\beta^2)}(\tau,z)$ is nonnegative in $q$ and for $\chi_{0,s}(\tau)$, its variable $y$ is specialized to $e^{2\pi i \frac{s}{b}}$ which does not modify the powers of $q$. So $\chi_{0,s}(\tau,z)$ is always regular at $q=0$. 

The term with the lowest power of $q$ in $q^{\frac{\kappa^2}{2}\frac{r^2}{b^2}}\theta_1(\tau,\kappa(\frac{r}{b} \tau+\frac{s}{b}))$ is given by multiplying out all $q^{n-\kappa \frac{r}{b}}$ with negative  $n-\kappa \frac{r}{b}$ in the third factor of the product formula \eqref{eq:GritsenkoTheta}. The lowest power of $q$ in the specialization \eqref{eq:SpecializedSingleThetaQuotient} is then
\begin{align} \label{eq:LowestPowerThetaQuotient}
\frac{q^{\frac{\alpha^2}{2} \frac{r^2}{b^2}+\frac{1}{8} - \frac{\alpha}{2}\frac{r}{b} +\overset{\lfloor \alpha \frac{r}{b} \rfloor}{\underset{n=1}{\sum}} n- \alpha \frac{r}{b}}}{q^{\frac{\beta^2}{2} \frac{r^2}{b^2}+\frac{1}{8} - \frac{\beta}{2}\frac{r}{b} +\overset{\lfloor \beta \frac{r}{b} \rfloor}{\underset{n=1}{\sum}} n- \beta \frac{r}{b}}} =q^{\frac{\alpha^2-\beta^2}{2} \frac{r^2}{b^2} - \frac{\alpha-\beta}{2}\frac{r}{b} +\frac{\lfloor \alpha \frac{r}{b} \rfloor(\lfloor \alpha \frac{r}{b} \rfloor +1)}{2}-\frac{\lfloor \beta \frac{r}{b} \rfloor(\lfloor \beta \frac{r}{b} \rfloor +1)}{2}- \alpha \frac{r}{b} \lfloor \alpha \frac{r}{b} \rfloor+\beta\frac{r}{b} \lfloor \beta \frac{r}{b} \rfloor}.
\end{align}

The condition for the theta quotient \eqref{eq:singlethetaquotient} to have slow growing $f_{0,b}(n,l)$, for $b=\frac{1}{2}(\alpha-\beta)$, is then
\begin{align} \label{eq:ConditionSlowGrowthSingleQuotient}
\begin{split}
\frac{\alpha^2-\beta^2}{2} \frac{r^2}{b^2} - \frac{\alpha-\beta}{2}\frac{r}{b} +\frac{\lfloor \alpha \frac{r}{b} \rfloor(\lfloor \alpha \frac{r}{b} \rfloor +1)}{2}-\frac{\lfloor \beta \frac{r}{b} \rfloor(\lfloor \beta \frac{r}{b} \rfloor +1)}{2} \\ - \alpha \frac{r}{b} \lfloor \alpha \frac{r}{b} \rfloor+\beta\frac{r}{b} \lfloor \beta \frac{r}{b} \rfloor \geq 0, \ \ \ 1 \leq r \leq b-1.
\end{split}
\end{align}

	Slow growth is equivalent to the condition \eqref{eq:ConditionSlowGrowthSingleQuotient}. We have $b=\frac{k\beta}{2}$ and the left side of \eqref{eq:ConditionSlowGrowthSingleQuotient} may be written as 
	\begin{align} \label{eq:ProofofSingleClassification}
	\begin{split}
	2\frac{k+2}{k}r^2-r+\frac{ \lfloor (k+1) \frac{2r}{k} \rfloor ( \lfloor (k+1) \frac{2 r}{k} \rfloor+1)}{2}-\frac{ \lfloor \frac{2r}{k} \rfloor ( \lfloor \frac{2 r}{k} \rfloor+1)}{2} \\ -(k+1)\frac{2r}{k} \lfloor (k+1) \frac{2r}{k} \rfloor + \frac{2r}{k} \lfloor \frac{2r}{k} \rfloor, \ \ \  0< r < \frac{k\beta}{2}.
	\end{split}
	\end{align}
	Using $\lfloor \frac{k+1}{k}2r \rfloor = \lfloor 2r + \frac{2r}{k} \rfloor = 2r + \lfloor \frac{2r}{k} \rfloor$, \eqref{eq:ProofofSingleClassification} reduces to 0 for each value of $r$. Thus, we have slow growth.
\end{proof}

\subsubsection{Multiple theta quotients.} We consider the general case of multiple theta quotients. Unlike for single quotients, the situation is more complicated, so that we are not able to give a complete classification of all slow growing quotients. Instead, we numerically compute a list up to index 39.

The condition for the theta quotient 
\begin{align}
\overset{N}{\underset{j=1}{\prod}} \frac{\theta_1(\tau, n_1 z)\theta_1(\tau,n_2 z) \cdots \theta_1(\tau,n_N z)}{\theta_1(\tau, m_1 z)\theta_1(\tau,m_2 z) \cdots \theta_1(\tau,m_N z)}
\end{align}
to be slow growth about $y^{b}=y^{\frac{1}{2}(\overset{N}{\underset{j=1}{\sum}} n_j - m_j)}$
is just the sum of \eqref{eq:ConditionSlowGrowthSingleQuotient} over each quotient, leading to the condition
\begin{align} \label{eq:ConditionSlowGrowthMultiQuotient}
\begin{split}
\overset{N}{\underset{j=1}{\sum}} \frac{n_j^2-m_j^2}{2} \frac{r^2}{b^2} - \frac{n_j-m_j}{2}\frac{r}{b} +\frac{\lfloor n_j \frac{r}{b} \rfloor(\lfloor n_j \frac{r}{b} \rfloor +1)}{2}-\frac{\lfloor m_j \frac{r}{b} \rfloor(\lfloor m_j \frac{r}{b} \rfloor +1)}{2} \\ - n_j \frac{r}{b} \lfloor n_j \frac{r}{b} \rfloor+m_j\frac{r}{b} \lfloor m_j \frac{r}{b} \rfloor \geq 0, \ \ \ 1 \leq r \leq b-1.
\end{split}
\end{align}

Unlike the case of single theta quotients, the most polar term is not guaranteed to be $y^b$, indeed some $q^a y^b$ for $a >0$ may be the most polar term. 

We computed all theta quotients up to $N=7$ quotients for index $1 \leq m \leq 61$ and checked them for slow growth about $y^b$ using the condition \eqref{eq:ConditionSlowGrowthMultiQuotient}. For each index $m$ and $b=\frac{1}{2}\left(\overset{N}{\underset{j=1}{\sum}} n_j - m_j\right)$, we found the dimension of the space $\Js_\theta$ spanned by theta quotients that have slow growth at $y^b$. We present our results in (\ref{table:ThetaQuotients}), and we include the corresponding dimension of $\Js_{m}^{0,b}$. Note that the two dimensions presented are not directly comparable, since a theta quotient may not have $y^b$ as its most polar term.

\begin{table}[H]
	\centering
	\begin{tabular}{ c  c  c c | c c c c  }
	m & b & dim $\Js_m^{0,b}$ & $\dim \Js_\theta$ & 	m & b & dim $\Js_m^{0,b}$ & $\dim \Js_\theta$ \\
3 & 1 & 1 & 1 &22 & 3 & 1 & 1 \\
4&1&1&1 & 22 & 4 & 2 & 2 \\
6&1 & 1 & 1 & 24 & 2 & 1 & 2\\
6&2&2&1 &24 & 3 & 2 & 3\\
7&2&1&1 & 24 &4 & 4 & 4\\
8&2&2&1 & 25 & 4 & 1 & 1\\
9&2&1&1 & 25 & 5 & 4 & 2\\
9&3&3&1 & 26 & 4 & 2 & 1\\
10 & 2 & 1 & 1& 26 & 5 & 2 & 1\\
10 & 3 & 2 & 1 & 27 & 3 & 1 & 2\\
11 & 3 & 1 & 1 & 27 & 4 & 1 & 1\\
12 & 2 &2 &2 & 27 & 5 & 2 & 2\\
12 & 3 & 3 & 2& 28 & 3 & 1 & 2\\
13 &3 & 1 & 1 & 28 & 4 & 3 & 3\\
14 &3 &1 & 1 & 30 & 3 & 1 & 2\\
15 & 2 & 1 & 1 & 30 & 4 & 2 & 3\\
15 & 3 & 2 & 2  & 30 & 5 & 4 & 4\\
16 & 2 & 1 & 1 & 31 & 5 & 1 & 1 \\
16 &3 & 2 & 2  & 32 & 4 & 3 & 2\\
16 & 4 & 4 & 2 & 32 & 5 & 2 & 1\\
17 & 4 & 2 & 1 & 33 & 4 & 2 & 2\\
18 & 2 & 0 & 1& 33 & 5 & 1  & 1\\
18 &3 & 3 & 3 & 34 & 4 & 1 & 2\\
18 &4 & 3 & 2 & 34 & 5 & 1 & 1\\
19 & 3 & 1 & 1 & 35 & 5 & 3 & 1\\
19 & 4 & 1 & 1 & 36 & 3 & 1 & 3\\
20 & 3 & 1 & 1 & 36 & 4 & 3 & 4\\
20 & 4 & 4 & 2 & 37 & 6 & 3 & 3\\
21 & 3 & 1 & 1 & 38 & 6 & 3 & 3\\
21 & 4 & 2 & 2 & 39 & 3 & 0 & 1
\end{tabular}
\caption{The dimension of the space of slow growing theta quotients $\Js_\theta$ about a fixed $y^b$ and index $m$. For comparison we also give the dimension of all slow growing weak Jacobi forms.}
\label{table:ThetaQuotients}
\end{table}

Let us finish this section by comparing to and extending the results obtained in \cite{Belin:2020nmp}.
Infinite classes of slow growing wJfs were obtained in \cite{Belin:2020nmp} from the elliptic genera of  ADE minimal models. As a corollary of Proposition~\ref{prop:ClassificationSingleTheta} and \eqref{eq:ConditionSlowGrowthMultiQuotient}, we can complete and greatly streamline  their proof:
\begin{corollary}\label{cor:ADEmm}
The ADE minimal model wJfs given in (3.8) of \cite{Belin:2020nmp} are all slow growing around $q^0 y^b$ of maximal polarity.
\end{corollary}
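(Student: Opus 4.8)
The plan is to derive the corollary directly from the theta-quotient presentation of the ADE minimal model forms together with the slow-growth criterion \eqref{eq:ConditionSlowGrowthMultiQuotient}, rather than from the argument of \cite{Belin:2020nmp}. First I would rewrite each form appearing in (3.8) of \cite{Belin:2020nmp} as a product $\prod_j \theta_1(\tau,n_j z)/\theta_1(\tau,m_j z)$, reading off the rescaling factors $n_j,m_j$ from the Coxeter number $h$ and the exponents of the underlying simply-laced Lie algebra. For the $A_k$ series this is a single quotient of the shape $\theta_1(\tau,(k+1)\beta z)/\theta_1(\tau,\beta z)$ with $k$ even or $\beta$ even, so slow growth about its most polar term is exactly the content of Proposition~\ref{prop:ClassificationSingleTheta}; only the $D$ and $E$ families require genuinely new work.

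For the $D$ and $E$ families I would substitute the $n_j,m_j$ into the left-hand side of \eqref{eq:ConditionSlowGrowthMultiQuotient} and show it is $\ge 0$ for each $r$ with $1\le r\le b-1$. The template is the last step in the proof of Proposition~\ref{prop:ClassificationSingleTheta}, where the floor identity $\lfloor\tfrac{k+1}{k}2r\rfloor = 2r+\lfloor\tfrac{2r}{k}\rfloor$ forces each single-quotient summand of \eqref{eq:ConditionSlowGrowthSingleQuotient} to vanish. I expect one of two things to happen: either the factors of the ADE product regroup into quotients of the divisibility type $\theta_1(\tau,(k+1)\beta z)/\theta_1(\tau,\beta z)$, so that each grouped summand in \eqref{eq:ConditionSlowGrowthMultiQuotient} is identically zero and slow growth is immediate; or, where no such regrouping exists, the nonnegativity follows from the reflection symmetry $e\mapsto h-e$ of the exponent multiset combined with elementary floor-sum identities, with the finitely many residues $r$ relevant to $E_6,E_7,E_8$ handled by direct computation.

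Separately I would verify the "maximal polarity" part of the statement, i.e.\ that the most polar term of each ADE form is the pure power $y^b$ with $b=\tfrac12\sum_j(n_j-m_j)$ and not some $q^a y^b$ with $a>0$; recall the remark preceding the corollary that general multi-quotients can fail this. For this I would extract the $q^0$ Fourier--Jacobi coefficient of the product from the product formula \eqref{eq:GritsenkoTheta} --- a finite Laurent polynomial in $y$ obtained by collecting the $q$-independent contributions of each factor --- and check that its extreme $y$-exponents are $\pm b$, which again reduces to the arithmetic of the ADE exponents.

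The step I expect to be the main obstacle is the $D$/$E$ analysis in the second paragraph: because the individual factors $\theta_1(\tau,n_j z)/\theta_1(\tau,m_j z)$ need not satisfy $m_j\mid n_j$, Proposition~\ref{prop:ClassificationSingleTheta} does not apply factor by factor, and holomorphy --- let alone slow growth --- is a property only of the full product. Thus the whole weight of the argument rests on a floor-function/lattice-point identity for the ADE exponents, and the real content of "completing and streamlining" the proof of \cite{Belin:2020nmp} is to exhibit that identity, or the regrouping into divisibility-type quotients, uniformly rather than type by type.
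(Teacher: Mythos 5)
Your plan coincides with the paper's own proof: the $A$ series is dispatched as a special case of Proposition~\ref{prop:ClassificationSingleTheta}, the three $E$ cases are checked individually against \eqref{eq:ConditionSlowGrowthMultiQuotient}, and the $D$ series is handled by showing the left-hand side of \eqref{eq:ConditionSlowGrowthMultiQuotient} vanishes for all $1\le r\le b-1$. Your extra step of confirming that the most polar term is really $q^0y^b$ rather than some $q^ay^b$ with $a>0$ is a point the paper's proof leaves implicit, but the overall route is the same.
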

\begin{proof}
The A series are single theta quotients and so are a special case of Proposition \ref{prop:ClassificationSingleTheta}. The three E series may each be checked individually, using \eqref{eq:ConditionSlowGrowthMultiQuotient}. For the D series, it is easy to see that \eqref{eq:ConditionSlowGrowthMultiQuotient} is equal to zero for all $1 \leq r \leq b-1$.
\end{proof}

Furthermore \cite{Belin:2020nmp} proposed a conjecture that the wJfs obtained from the elliptic genus of the $A_{k+1}$ Kazama-Suzuki models with $M=2$ are slow growing. We give a simple proof of this conjecture using the criterion \eqref{eq:ConditionSlowGrowthMultiQuotient}:

\begin{proposition} \label{lemma:KSmodel}
	The weak Jacobi forms for the $M=2$ $A_{k+1}$ Kazama-Suzuki models, defined as
	\begin{align} \label{eq:KazamaSuzuiki}
	\varphi^{2,k}(\tau,z)=\frac{\theta_1(\tau,(k+1)z)}{\theta_1(\tau,z)} \frac{\theta_1(\tau,(k+2)z)}{\theta_1(\tau,2z)}, \ \  k \in \mathbb{Z}_{>0}
	\end{align}
	have slow growth about the polar term $y^k$ of maximal polarity.
\end{proposition}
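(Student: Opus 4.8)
The plan is to treat $\varphi^{2,k}$ as a two-factor theta quotient and feed it directly into the slow-growth criterion \eqref{eq:ConditionSlowGrowthMultiQuotient}; the only real work is to evaluate the floor functions that occur and to observe that, just as for single quotients in Proposition~\ref{prop:ClassificationSingleTheta}, the resulting expression is identically zero.

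First I would record the data. In the conventions of \eqref{eq:ConditionSlowGrowthMultiQuotient} we have $N=2$, $(n_1,n_2)=(k+1,k+2)$, $(m_1,m_2)=(1,2)$, so the index is $\tfrac12\big((k+1)^2-1+(k+2)^2-4\big)=k^2+3k$ and the distinguished polar term is $y^{b}$ with $b=\tfrac12\big((k+1-1)+(k+2-2)\big)=k$. I would also note that $\varphi^{2,k}$ is a genuine weak Jacobi form of weight $0$ and trivial multiplier: numerator and denominator each contain two copies of $\theta_1$, and holomorphicity on $\mathbb H\times\mathbb C$ holds because the zero divisor of $\theta_1(\tau,2z)$ is contained in that of $\theta_1(\tau,(k+1)z)$ when $k$ is odd and in that of $\theta_1(\tau,(k+2)z)$ when $k$ is even, while the zeros of $\theta_1(\tau,z)$ are cancelled by either numerator factor. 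This parity remark is also why one cannot simply apply Proposition~\ref{prop:ClassificationSingleTheta} to the two factors separately when $k$ is odd; and even for $k$ even the index $b=k$ of the distinguished term is not the one attached to either factor alone (which is $k/2$). Working with the combined criterion \eqref{eq:ConditionSlowGrowthMultiQuotient} bypasses both issues.

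Next I would evaluate the floors for $1\le r\le b-1=k-1$. Since $0<r/k<1$ one has $\lfloor (k+1)r/k\rfloor=r$ and $\lfloor r/k\rfloor=0$; setting $s:=\lfloor 2r/k\rfloor\in\{0,1\}$ one has $\lfloor (k+2)r/k\rfloor=r+s$ and $\lfloor 2r/k\rfloor=s$. Substituting into the $j=1$ and $j=2$ summands of \eqref{eq:ConditionSlowGrowthMultiQuotient} and simplifying, the $j=1$ part collapses to
\begin{align*}
\frac{(k+2)r^2}{2k}-\frac r2+\frac{r(r+1)}2-\frac{(k+1)r^2}{k}=0,
\end{align*}
and in the $j=2$ part the two occurrences of $rs$ — one from $\tfrac{(r+s)(r+s+1)}2-\tfrac{s(s+1)}2$ and one of opposite sign from $-(k+2)\tfrac rk(r+s)+2\tfrac rk s$ — cancel, leaving the $s$-independent expression
\begin{align*}
\frac{(k+4)r^2}{2k}-\frac r2+\frac{r^2+r}2-\frac{(k+2)r^2}{k}=0.
\end{align*}
Thus the left-hand side of \eqref{eq:ConditionSlowGrowthMultiQuotient} vanishes for every $r$ in range, so the criterion holds (with equality). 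The case $r=0$ contributes nothing, as already observed around \eqref{eq:SpecializedSingleThetaQuotient}.

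Finally I would conclude via Proposition~\ref{prop:GeneratingFunctions}: all specializations $\chi_{r,s}$ are regular at $q=0$, so $\varphi^{2,k}$ is slow growing about $q^0y^k$, which in particular forbids any term of polarity exceeding $k^2$. Since the $q^0$-coefficient of $\varphi^{2,k}$, read off from \eqref{eq:GritsenkoTheta} as $\tfrac{(y^{(k+1)/2}-y^{-(k+1)/2})(y^{(k+2)/2}-y^{-(k+2)/2})}{(y^{1/2}-y^{-1/2})(y-y^{-1})}$, has extreme monomials $y^{\pm k}$ with coefficient $1$, the term $y^k$ is present with polarity $k^2$ and is therefore of maximal polarity, as claimed. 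I do not anticipate a genuine obstacle: the computation is mechanical and of the same flavor as the single-quotient case, and the only points needing care are the jump of $\lfloor(k+2)r/k\rfloor$ at $r=\lceil k/2\rceil$ (harmless, since the $rs$ terms cancel) and the temptation to reduce factor-by-factor, which fails.
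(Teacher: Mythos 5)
Your proposal is correct and follows essentially the same route as the paper: substitute $(n_1,n_2)=(k+1,k+2)$, $(m_1,m_2)=(1,2)$, $b=k$ into \eqref{eq:ConditionSlowGrowthMultiQuotient}, simplify the floors via $\lfloor(k+1)r/k\rfloor=r$ and $\lfloor(k+2)r/k\rfloor=r+\lfloor 2r/k\rfloor$, and observe that everything cancels to $0$ for each $r$. Your added checks of holomorphy and of $y^k$ being present with maximal polarity are sensible supplements that the paper omits, but they do not change the approach.
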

\begin{proof}
From \eqref{eq:ConditionSlowGrowthMultiQuotient}, $\varphi^{2,k}$ has slow growth about $y^k$ if and only if for all $ 1 \leq r \leq k-1$, the following expression is greater than or equal to 0:
	 \begin{multline} \label{eq:KSModelComputation}
	 \frac{(k+1)^2-1}{2}\frac{r^2}{k^2}-\frac{(k+1)-1}{2}\frac{r}{k}
	 +\frac{(k+2)^2-2^2}{2}\frac{r^2}{k^2}-\frac{k+2-2}{2}\frac{r}{k} \\
	 +\frac{ \lfloor (k+1) \frac{r}{k} \rfloor (\lfloor (k+1) \frac{r}{k} \rfloor +1)}{2} -\frac{ \lfloor \frac{r}{k} \rfloor (\lfloor \frac{r}{k} \rfloor +1)}{2} -(k+1)\frac{r}{k} \lfloor (k+1) \frac{r}{k} \rfloor + \frac{r}{k} \lfloor \frac{r}{k} \rfloor \\ + \frac{\lfloor (k+2)\frac{r}{k}\rfloor (\lfloor (k+2) \frac{r}{k} \rfloor +1)}{2}  -\frac{ \lfloor 2 \frac{r}{k} \rfloor (\lfloor 2 \frac{r}{k} \rfloor +1)}{2}-(k+2) \frac{r}{k} \lfloor (k+2) \frac{r}{k} \rfloor + 2\frac{r}{k} \lfloor 2 \frac{r}{k} \rfloor.
	\end{multline}
	 To simplify the expression, we have $0<\frac{r}{k} <1$ for each $r$ so that
	 \begin{align}
	 \lfloor (k+1) \frac{r}{k} \rfloor = r, \ \ \ \ \ \ \ \lfloor (k+2) \frac{r}{k} \rfloor = r+\lfloor 2 \frac{r}{k} \rfloor.
	 \end{align}
	 This reduces \eqref{eq:KSModelComputation} to
 	 \begin{multline}
	 \frac{k^2+2k}{2}\frac{r^2}{k^2}-\frac{r}{2}+\frac{k^2+4k}{2}\frac{r^2}{k^2}-\frac{r}{2}+\frac{r(r+1)}{2}-(k+1)\frac{r^2}{k} \\ +\frac{(r+\lfloor 2 \frac{r}{k} \rfloor)(r+\lfloor 2 \frac{r}{k} \rfloor +1)}{2}-\frac{\lfloor 2 \frac{r}{k} \rfloor (\lfloor 2 \frac{r}{k} \rfloor +1)}{2}-(k+2)\frac{r}{k}(r+\lfloor 2 \frac{r}{k} \rfloor)+2\frac{r}{k} \lfloor 2 \frac{r}{k} \rfloor.
	 \end{multline}
	 Now, a simple matter of cancellations gives us that the above expression equals 0 for each value of $r$.
\end{proof}

\section{Growth around $q^a y^b$}\label{s:qayb}
Let us finally say a few words about slow growth about $q^a y^b$ with $a>0$. In this case, there is no analogue to proposition~\ref{prop:GeneratingFunctions}. This means that even for a given wJf, it is not immediately obvious how to determine if it is slow growing. In this section we gather evidence that the growth behaviors of $f_{a,b}(n,l)$ are nonetheless characterized in the same way as $f_{0,b}(n,l)$: \begin{itemize}
	\item[1.] $f_{a,b}(n,l)$ either has exponential growth in $n$ and $l$ or is bounded as a function of $n$ and $l$.
 \item[2.] 

  When  $f_{a,b}(n,l)$ is bounded as a function of $n$ and $l$,  there are integers $e, f, g, h \in \mathbb{Z}$ such that
\begin{align*}
f_{a,b}(n,l) = \begin{cases} \text{ nonzero } & : e n + f l =0 \text{ or } g n + h l =0 \\0 & : \text{ else}. \end{cases}
\end{align*}
\end{itemize}
Even though we mainly gather numerical evidence, let us start out with some rigorous analytical results for index $m=6$.

\subsection{Analytical Results}
For $m=6$, table~\ref{table:ybTable} shows that the space $\Js_6^{0,1}$ of slow growing forms about $q^0y^1$ is generated by a single wJf, which can be written as the theta quotient $\phi_6 :=\frac{\theta_1(\tau,4z)}{\theta_1(\tau,2z)}$. $\phi_6$ has two terms of polarity 1, namely $q^0y$ and $q^1 y^5$. We will establish that $\phi_6$ is also slow growing about $q^1y^5$, and that the $f_{1,5}$ are in fact closely related to the $f_{1,0}$.
For this, let us first introduce
\begin{definition} Let $\varphi$ be a wJf of index $m$. Let $\sigma$ be a permutation of $\mathbb{Z}/2m \mathbb{Z}$. We define the operation $\hat{W}_\sigma$ to be
\begin{align*}
\hat{W}_\sigma : \varphi= \underset{l \text{ mod } 2m}{\sum} h_l(\tau) \theta_{m,l}(\tau,z) \mapsto \underset{l \text{ mod } 2m}{\sum} h_{\sigma(l)}(\tau)\theta_{m,l}(\tau,z).
\end{align*}
\end{definition}
This $\hat W_\sigma$ is a generalization of the Atkin-Lehner involution $W_n$, for which $\sigma$ is given by a multiplication by some integer $\xi$, $\sigma(l)=\xi l$. Atkin-Lehner involutions $W_n$ map Jacobi forms to Jacobi forms \cite[Thm~5.2]{MR781735}, but not necessarily weak Jacobi forms to weak Jacobi forms, as they may introduce negative powers of $q$, giving nearly holomorphic Jacobi forms.
For general $\sigma$, our generalized $\hat W_\sigma$ may not even give any Jacobi type form. However, as we will see, it can still be a useful tool to extract coefficients more easily.

\begin{proposition}\label{prop:W3f15}
Take $\varphi\in J_{0,6}$. Then 
\be
f_{1,5}(n,l) = \hat f_{0,1}(2n+l,-9n-5l)
\ee
where $\hat f$ are the $f$ of the weak Jacobi form $\hat W_\sigma\varphi$ with $\sigma=(1 \ 5)(2\ 10)(4\ 8)(7\  11)$.
\end{proposition}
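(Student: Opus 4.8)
The plan is to rewrite both sides of the claimed identity as sums of the single shape $\sum_{t\in\Z}C(\text{discriminant},\ \text{residue mod }12)$, where $C$ records the Fourier coefficients of the theta components $h_\mu$, and then to match the two sums by one reindexing of the summation variable together with one quadratic identity in $n,l$. First I would record, from the theta decomposition \eqref{eq:ThetaDecomp}, that for $\varphi\in J_{0,6}$ the coefficient $c(n,l)$ is the coefficient of $q^{(24n-l^2)/24}$ in $h_{l\bmod 12}$, so it depends only on $D:=24n-l^2$ and on $\mu:=l\bmod 12$; write $c(n,l)=C(D,\mu)$, where $C(D,\mu)$ denotes the coefficient of $q^{D/24}$ in $h_\mu$. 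Next I would observe that the permutation $\sigma=(1\,5)(2\,10)(4\,8)(7\,11)$ of $\Z/12\Z$ is exactly multiplication by $5$, and that $5^2\equiv1\pmod{24}$ forces $-(5\mu)^2\equiv-\mu^2\pmod{24}$; this is precisely what makes the $q$-grading of $h_{\sigma(\mu)}$ compatible with the $\theta_{6,\mu}$-slot, so $\hat W_\sigma\varphi$ has a well-defined Fourier expansion with $\hat c(n,l)=C(24n-l^2,\,5l\bmod 12)$.

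Then I would expand the left-hand side. Since $b^2-4ma=25-24=1>0$ the sum $f_{1,5}(n,l)=\sum_{r\in\Z}c(nr+r^2,l-5r)$ is finite, and in $(D,\mu)$-coordinates its $r$-th term is $C\big({-l^2}+(24n+10l)r-r^2,\ (l-5r)\bmod 12\big)$. Completing the square in $r$ and substituting $r=(12n+5l)+t$ (legitimate because $12n+5l\in\Z$) converts the discriminant into $24(6n^2+5nl+l^2)-t^2$ and the residue into $-5t\bmod 12$, so
\begin{align*}
f_{1,5}(n,l)=\sum_{t\in\Z}C\big(24(6n^2+5nl+l^2)-t^2,\ -5t\bmod 12\big).
\end{align*}
The identical manipulation applied to $\hat f_{0,1}(N,L)=\sum_{s\in\Z}\hat c(Ns,L-s)=\sum_{s\in\Z}C\big(24Ns-(L-s)^2,\,5(L-s)\bmod 12\big)$ — complete the square in $s$, substitute $s=(12N+L)+t$, and use $5(-t)\equiv -5t\pmod{12}$ — gives
\begin{align*}
\hat f_{0,1}(N,L)=\sum_{t\in\Z}C\big(24(6N^2+NL)-t^2,\ -5t\bmod 12\big).
\end{align*}
Finally I would check the algebraic identity $6N^2+NL=6(2n+l)^2+(2n+l)(-9n-5l)=6n^2+5nl+l^2$ for $N=2n+l$, $L=-9n-5l$, which is a one-line expansion; the two displayed sums then coincide term by term, proving the proposition.

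The step I expect to be the only real obstacle is the bookkeeping in the first paragraph: verifying that $\hat W_\sigma$ (with this particular $\sigma$) is consistent at the level of $q$-expansions — namely that $5^2\equiv1\pmod{24}$ is exactly the condition needed — so that $\hat c$, and hence $\hat f_{0,1}$ and its finiteness, genuinely make sense even though $\hat W_\sigma\varphi$ need not be a weak Jacobi form in general. Everything after that is two parallel completions of the square and a quadratic identity. I would also remark that no slow-growth hypothesis is used anywhere: the result is an identity of finite sums valid for all $\varphi\in J_{0,6}$, and it is this that allows one to deduce slow growth of $f_{1,5}$ for $\phi_6$ from slow growth of $f_{0,1}$ for $\hat W_\sigma\phi_6$.
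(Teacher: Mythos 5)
Your proof is correct and follows essentially the same route as the paper's: both use the theta decomposition to reduce $c(n,l)$ to a function of the discriminant $24n-l^2$ and of $l \bmod 12$, and then match the two sums term by term under the shift $r = s - (3n+2l)$ (your $t$), using that $\sigma$ is multiplication by $5$ on $\mathbb{Z}/12\mathbb{Z}$. Your packaging --- completing the square on both sides so that each sum takes the symmetric form $\sum_{t} C\bigl(24(6n^2+5nl+l^2)-t^2,\,-5t\bigr)$ --- is just a more explicit version of the paper's direct check that corresponding terms have equal discriminants and second arguments related by multiplication by $5$ mod $12$, and your observation that $5^2\equiv 1 \pmod{24}$ is what makes $\hat c$ well defined is a worthwhile addition.
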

Note that this $\hat W_\sigma$ is actually $W_3$, the Atkins-Lehner involution with $\xi=5$.
\begin{proof}
Consider the theta decomposition
$\varphi(\tau,z)= \underset{\mu \text{ mod } 2m}{\sum} h_{\mu}(\tau) \theta_{m,\mu}(\tau,z)$. As is well known, the coefficients $c(n,l)$ of $\varphi$ depend only on $l \text{ mod } 2m\mathbb{Z}$ and the discriminant $4mn-l^2$. 
By definition,
\begin{align}
    f_{1,5}(n,l)=\underset{r \in \mathbb{Z}}{\sum} c(nr+r^2,l-5r) \ .
\end{align}
One can verify that the coefficient $c(nr+r^2,l-5r)$ has the same discriminant as the coefficient $c\left((2n+l)(r+3n+2l),-9n-5l-(r+3n+2l)\right)$ and, over the ring $\mathbb{Z}/12\mathbb{Z}$, the second argument of the former coefficient is five times the second argument of the latter coefficient. 
This means that for  $c\left((2n+l)(r+3n+2l),-9n-5l-(r+3n+2l)\right)$ appearing as the coefficient of some $q^Ny^{\mu+2mk}$ in $\varphi$, $c(nr+r^2,l-5r)$ appears as the coefficient of $q^Ny^{5\mu+2mk}$. Note that the map $\mu \mapsto 5\mu$ in the ring $\mathbb{Z}/12\mathbb{Z}$ applies precisely the permutation $\sigma$ to $\mu$, so that $c(nr+r^2,l-5r)$ is the coefficient of $q^Ny^{\mu+2mk}$ in $\hat{W}_\sigma(\varphi)$.

\end{proof}

\begin{corollary} \label{cor:Index6SameSpaces} $\Js^{1,5}_6=\Js^{0,1}_6$.  It is spanned by $\phi_6:= \frac{\theta_1(\tau,4z)}{\theta_1(\tau,2z)}$, whose  $f_{1,5}(n,l)$ and $f_{0,1}(n,l)$ are given by
	\begin{align}\label{f01}
	f_{0,1}(n,l) = \begin{cases} 2 & n=0 \text{ or } 6n+l=0 \\ 0 & \text{ else }, \end{cases}
	\end{align} 
    and
	\begin{align}\label{f15}
	f_{1,5}(n,l) = \begin{cases} -2 & 2n+l=0 \text{ or } 3n+l=0 \\ 0 & \text{ else }. \end{cases}
	\end{align} 
\end{corollary}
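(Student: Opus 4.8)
The plan is to deduce the equality $\Js^{1,5}_6=\Js^{0,1}_6$ directly from Proposition~\ref{prop:W3f15} together with the fact that $W_3$ is an involution on $J_{0,6}$, and then to pin down the explicit form $\phi_6$ and its $f$-values by a short computation using the results already in hand. For the equality of spaces: by Proposition~\ref{prop:W3f15}, for $\varphi\in J_{0,6}$ the function $f_{1,5}(n,l)$ of $\varphi$ equals $\hat f_{0,1}(2n+l,-9n-5l)$, where $\hat f$ denotes the $f_{0,1}$-data of $\hat W_\sigma\varphi = W_3\varphi$. Since the linear change of variables $(n,l)\mapsto(2n+l,-9n-5l)$ is invertible over $\Q$ (its determinant is $2\cdot(-5)-1\cdot(-9)=-1$), boundedness of $f_{1,5}$ for $\varphi$ is equivalent to boundedness of $f_{0,1}$ for $W_3\varphi$. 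One must also check the polarity condition in the definition of $\Js$: a form in $\Js^{1,5}_6$ must have no term of polarity exceeding $5^2-4\cdot6\cdot1=1$, and the same holds for $\Js^{0,1}_6$; and $W_3$, being a permutation of the theta components, preserves the set of discriminants $4mn-l^2$ occurring, hence preserves maximal polarity. Therefore $\varphi\in\Js^{1,5}_6$ iff $W_3\varphi\in\Js^{0,1}_6$. Because $W_3$ is an involution, this gives $W_3\big(\Js^{1,5}_6\big)=\Js^{0,1}_6$, so the two spaces have the same dimension; to get genuine equality I would observe that $\Js^{0,1}_6$ is one-dimensional (from Table~\ref{table:ybTable}) and spanned by $\phi_6$, and then check that $W_3\phi_6=\phi_6$, i.e.\ that $\phi_6$ is fixed by this Atkin--Lehner involution — equivalently, that the single generator is an eigenvector, which a one-dimensional space forces up to sign, and the sign is $+1$ since $W_3$ fixes the polar coefficient $c(0,1)$ normalized to $1$.

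Next I would identify $\phi_6=\frac{\theta_1(\tau,4z)}{\theta_1(\tau,2z)}$ as the generator: this is the single theta quotient with $\alpha=4$, $\beta=2$, which by Proposition~\ref{prop:ClassificationSingleTheta} (with $k=1$, $\beta=2$ even) is a genuine wJf of index $\frac{\beta^2 k(k+2)}{2}=\frac{4\cdot1\cdot3}{2}=6$, slow growing about its most polar term $y^{k\beta/2}=y^1$. Hence $\phi_6\in\Js^{0,1}_6$, and since $\dim\Js^{0,1}_6=1$ it spans. Its two polarity-$1$ terms $q^0y^1$ and $q^1y^5$ can be read off from the product expansion \eqref{eq:GritsenkoTheta} of $\theta_1$, or noted from the text.

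For the explicit values \eqref{f01} and \eqref{f15}: formula \eqref{f01} is exactly the specialization of the explicit description of $f_{0,b}$ for slow growing forms given right after Proposition~\ref{prop:GeneratingFunctions} — the $f_{0,1}(n,l)$ vanish unless $n=0$ or $mn+bl=6n+l=0$, and take a single nonzero value otherwise, which one computes to be $2$ by evaluating the relevant constant specialization $\chi_{n_b,j}$ of $\phi_6$ at $q=0$ (e.g.\ from $c(0,1)=1$ and the structure of the sum $f_{0,1}(0,l)=\sum_r c(r^2,1-r)$, whose surviving terms are $r=0,1$, each contributing $1$). Then \eqref{f15} follows by feeding \eqref{f01} into Proposition~\ref{prop:W3f15}: since $W_3\phi_6=\phi_6$, we get $f_{1,5}(n,l)=f_{0,1}(2n+l,-9n-5l)$, which is nonzero precisely when $2n+l=0$ (the ``$n=0$'' branch) or $6(2n+l)+(-9n-5l)=3n+l=0$ (the ``$6n+l=0$'' branch), with value $2$; the overall sign flip to $-2$ comes from the $y^{-1/2}$-type prefactor in $\theta_1$, i.e.\ from carefully tracking the half-integral shift in the argument transformation in the proof of Proposition~\ref{prop:W3f15} at the special value $b=5$ (concretely, the factor $(-1)^{\,b}$ or the sign picked up in $c\mapsto$ relabeled coefficient when $\mu$ is odd). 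I expect the main obstacle to be exactly this sign bookkeeping in \eqref{f15} and the verification that $W_3$ fixes $\phi_6$ rather than negating it; everything else is linear algebra over a one-dimensional space plus direct substitution into results already proved. I would double-check the sign by computing $f_{1,5}$ at one convenient point (say the term $q^1y^5$ itself, giving $c(1,5)+\dots$) directly from the $q,y$-expansion of $\phi_6$ and matching.
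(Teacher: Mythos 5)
Your overall architecture matches the paper's: identify $\phi_6$ as the unique (up to scale) form of maximal polarity $1$ to get $\Js^{1,5}_6\subset\Js^{0,1}_6$, show $\phi_6$ is an eigenvector of $W_3$, and then read off \eqref{f15} from Proposition~\ref{prop:W3f15}. But there is a concrete error at the crux: the eigenvalue. You assert $W_3\phi_6=+\phi_6$ ``since $W_3$ fixes the polar coefficient $c(0,1)$.'' It does not. $W_3$ permutes the theta components $h_1\leftrightarrow h_5$, so on coefficients of discriminant $-1$ it exchanges $c(0,1)$ with $c(1,5)$. From the product expansion, $\phi_6 = (y+y^{-1}) + q\,(y+y^{-1}-y^5-y^{-5})+\dots$, so $c(0,1)=1$ while $c(1,5)=-1$; the eigenvalue is therefore $-1$, and the paper proves exactly $W_3\phi_6=-\phi_6$ (after first checking that $W_3\phi_6$ is a genuine wJf, using that maximal polarity $1$ forces $q$-powers $\geq -1/24$, hence $\geq 0$ by integrality --- a step your ``preserves maximal polarity'' remark essentially covers). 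With the correct eigenvalue, the $-2$ in \eqref{f15} falls out of Proposition~\ref{prop:W3f15} with no further work: $f_{1,5}(n,l)=\hat f_{0,1}(2n+l,-9n-5l)$ where $\hat f$ is computed from $W_3\phi_6=-\phi_6$, i.e.\ $\hat f_{0,1}=-f_{0,1}$.

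Your attempt to recover the sign instead from ``the $y^{-1/2}$-type prefactor in $\theta_1$'' or ``a factor $(-1)^b$ picked up in the coefficient relabeling'' is not a real mechanism: Proposition~\ref{prop:W3f15} as stated and proved carries no sign factor whatsoever, and $\theta_1$ plays no role in it. So as written the proposal would conclude $f_{1,5}=+2$ on its support and then patch the sign by appeal to a nonexistent correction term. You flagged this yourself as the step you were unsure of; the fix is simply to compute $c(1,5)$ (one term of the $q$-expansion of the theta quotient) and conclude $W_3\phi_6=-\phi_6$, after which everything else in your argument, including the identification of the support lines $2n+l=0$ and $3n+l=0$ via the change of variables, is correct and matches the paper.
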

\begin{proof}
First note that (\ref{f01}) was established in \cite{Belin:2019jqz}. As mentioned above, $\Js^{0,1}_6$ is spanned by $\phi_6$.
To show that $\Js^{1,5}_6=\Js^{0,1}_6$, first note that up to multiplication, $\phi_6$ is the only wJf in $J_{0,6}$ that has no terms of polarity greater than 1, which establishes that $\Js^{1,5}_6\subset\Js^{0,1}_6$. To establish that $\phi_6$ is slow growing about $q^1y^5$, we will show that
\be\label{ALEigen}
 W_3\phi_6 = -\phi_6\ .
\ee
Proposition~\ref{prop:W3f15} then immediately implies (\ref{f15}), from which it follows that $\varphi$ is indeed slow growing about $q^1y^5$.
To prove (\ref{ALEigen}), we first establish that $ W_3\phi_6$ is a weak Jacobi form. This follows from the fact that the most polar term of $\phi_6$ has polarity 1, which means that $W_3\phi_6$ has powers of $q$ greater or equal to $-1/24$. Since it has integer powers, it follows that it cannot have any negative powers at all, thus establishing that it is a wJf.
To show the equality we then simply check that the polar terms of the two sides agree.
\end{proof}

In principle it is possible to apply this type of procedure also to other cases,  to relate $f_{a,b}$ to some $\hat f_{0,b'}$. The issue however is that the $\hat W_\sigma$  may not be an Atkin-Lehner involution. This means that $\hat W_\sigma \phi$ may not be a wJf, so that we cannot use a known expression for $\hat f_{0,b'}$. However, using this approach can still be useful for numerical computations: If $a>0$, evaluating the sum in (\ref{eq:sumsofcoeffs}) to compute $f_{a,b}(n,l)$ quickly becomes very expensive because $\phi$ has to be evaluated to high powers in $q$. It can thus be cheaper to instead evaluate $\hat f_{0,b'}$ for the function $\hat\phi:=\hat W_\sigma \phi$, even if $\hat\phi$ is not a wJf. An example for this is the following proposition:
\begin{proposition}\label{prop:f16}
Let $\varphi\in J_{0,8}$ and $\sigma=(2 \ 6)(4 \ 12)(10 \ 14)$. We then have 
\be
f_{1,6}(2n,2l) = \hat f_{0,2}(4n+2l,-12n-7l)
\ee
where  $f$ comes from $\varphi$, and $\hat f$ from $\hat{W}_\sigma \varphi$.
\end{proposition}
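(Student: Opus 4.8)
The plan is to mimic the proof of Proposition~\ref{prop:W3f15} almost verbatim, since the statement is the direct index-$8$ analogue of the index-$6$ result: we want to rewrite the finite sum defining $f_{1,6}(2n,2l)$ as a sum over coefficients of $\varphi$ that, after a relabeling of the summation variable, match the coefficients of $\hat W_\sigma\varphi$ entering $\hat f_{0,2}(4n+2l,-12n-7l)$. Throughout we use the standard fact (recalled in the proof of Proposition~\ref{prop:W3f15}) that for $\varphi\in J_{0,8}$ the Fourier coefficient $c(N,L)$ depends only on $L \bmod 16$ and on the discriminant $32N-L^2$.

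First I would write out both sides explicitly. By definition
\begin{align}
f_{1,6}(2n,2l)=\sum_{r\in\Z} c\big(2nr+r^2,\,2l-6r\big),
\end{align}
and, writing $\hat c$ for the coefficients of $\hat\varphi:=\hat W_\sigma\varphi$,
\begin{align}
\hat f_{0,2}(4n+2l,\,-12n-7l)=\sum_{s\in\Z}\hat c\big((4n+2l)s+2s^2,\,-12n-7l-2s\big).
\end{align}
The key computational step is to find an affine substitution $r=s+c_1 n+c_2 l$ (with the index-$6$ case suggesting something like $r=s+c_1n+c_2l$ obtained by completing the square in $2nr+r^2$ relative to $(4n+2l)s+2s^2$) under which the discriminant $32(2nr+r^2)-(2l-6r)^2$ of the $r$-th term of $f_{1,6}(2n,2l)$ equals the discriminant $32\big((4n+2l)s+2s^2\big)-(-12n-7l-2s)^2$ of the $s$-th term of $\hat f_{0,2}$. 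I would determine $c_1,c_2$ by matching the coefficient of $r^2$ (resp.\ $s^2$) — both are $32-36=-4$ on the left and $64-4=60\ldots$ — wait, more carefully one rescales so the quadratic-in-$r$ pieces agree, then solve the linear system forced by the linear and constant terms. Once the substitution is pinned down, checking the discriminant identity is a routine polynomial identity in $n,l,s$.

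Next I would verify the congruence statement: that the second argument $2l-6r$ of the left-hand coefficient is congruent mod $16$ to the image under $\sigma$ of the second argument $-12n-7l-2s$ of the right-hand coefficient, using the substitution relating $r$ and $s$. Here one checks that $\sigma=(2\ 6)(4\ 12)(10\ 14)$ acts, on the relevant residues (which should all turn out even), as multiplication by some unit $\xi$ mod $16$ — indeed $\sigma$ looks like multiplication by $3$ on $2\Z/16\Z\cong\Z/8\Z$, since $3\cdot2=6$, $3\cdot4=12$, $3\cdot10=30\equiv14$. So the content is: $2l-6r\equiv \xi\cdot(-12n-7l-2s)\pmod{16}$ with $\xi=3$ (or its inverse, depending on orientation), for the matched pair $(r,s)$. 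Combined with the discriminant identity and the fact that $c(N,L)$ depends only on $(L\bmod 16, \text{disc})$, this shows that the $r$-th term of $f_{1,6}(2n,2l)$, viewed as a coefficient of $\varphi$, equals the $s$-th term of $\hat f_{0,2}$ viewed as a coefficient of $\hat W_\sigma\varphi$; as $r\mapsto s$ is a bijection of $\Z$, summing gives the claimed equality.

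The main obstacle I anticipate is purely bookkeeping: getting the affine substitution $r\leftrightarrow s$ exactly right so that \emph{both} the discriminant identity and the mod-$16$ congruence hold simultaneously, and in particular confirming that the parity restriction (arguments lying in $2\Z$) is automatically satisfied — note that the left-hand second argument $2l-6r$ is manifestly even, and one must check $-12n-7l-2s$ is even for the matched $s$, i.e.\ that $7l$, hence $l$, is forced even, which is consistent with the inputs being $2n,2l$ and the output arguments $4n+2l,-12n-7l$ being the natural "doubled" data. There is also a minor subtlety, exactly as in Proposition~\ref{prop:W3f15}, that $\hat W_\sigma\varphi$ need not itself be a weak Jacobi form; but the statement only asserts an equality of the formal sums $f$ and $\hat f$ built from the $\theta$-components, so this causes no trouble — we never need $\hat\varphi$ to be holomorphic at the cusp. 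I expect no conceptual difficulty beyond carefully tracking the linear algebra over $\Z$ and over $\Z/16\Z$.
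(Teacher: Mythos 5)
Your overall strategy --- pair each term of $f_{1,6}(2n,2l)$ with a term of $\hat f_{0,2}(4n+2l,-12n-7l)$ by matching discriminants and checking that $\sigma$ (which is indeed multiplication by $3$ on the even residues mod $16$) relates the residues of the second arguments --- is exactly the paper's, whose proof consists of the single remark that the argument is analogous to Proposition~\ref{prop:W3f15}. However, your execution contains a concrete error that derails the central computation: from the definition $f_{a,b}(n,l)=\sum_r c(nr+ar^2,\,l-br)$ with $a=0$, $b=2$, one has $\hat f_{0,2}(N,L)=\sum_s \hat c(Ns,\,L-2s)$, with \emph{no} quadratic term in the first argument; you wrote $\hat c\bigl((4n+2l)s+2s^2,\,\cdot\bigr)$, apparently reading the first subscript of $f_{0,2}$ as the coefficient of $s^2$. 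This is precisely why your quadratic coefficients came out as $-4$ versus $60$ and you fell back on an unspecified ``rescaling''; no rescaling can reconcile those, and none is needed. With the correct formula,
\begin{equation*}
32(2nr+r^2)-(2l-6r)^2=-4(r-8n-3l)^2+256n^2+192nl+32l^2
\end{equation*}
and
\begin{equation*}
32(4n+2l)s-(12n+7l+2s)^2=-4\bigl(s-10n-\tfrac92 l\bigr)^2+256n^2+192nl+32l^2,
\end{equation*}
so the discriminants agree under $s=r+2n+\tfrac32 l$, and for that $s$ one checks $3\,(-12n-7l-2s)=3(-16n-10l-2r)\equiv 2l-6r \pmod{16}$, which is the required action of $\sigma$.

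A second, unresolved point: the shift $2n+\tfrac32 l$ is an integer only when $l$ is even. For odd $l$ the left-hand discriminants are even while the right-hand ones are odd (e.g.\ $n=0$, $l=1$ gives $-4(r-3)^2+32$ versus $32-(2s-9)^2$), so the term-by-term identification genuinely breaks down and the argument does not go through as stated. Your closing parity remark gets this backwards: the inputs being $2n,2l$ do not force $l$ itself to be even. You would need either to restrict the claim to even $l$ (consistent with $\sigma$ acting only on even residues, and presumably the intended scope) or to supply a separate argument for odd $l$. So, in addition to the definitional slip, the proof as written has a gap in exactly the bookkeeping you flagged as the main obstacle.
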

\begin{proof}
Analog to proposition~\ref{prop:W3f15}. Note that $\hat W_\sigma$ acts like an Atkin-Lehner with $\xi=3$, but only on the even powers.

\end{proof}

Let us note that proposition~\ref{prop:W3f15} and to a lesser extent also proposition~\ref{prop:f16} provide some evidence that the $f_{a,b}$ can be interpreted as the Fourier coefficients of some modular object, similar to the case of $f_{0,b}$ described in \cite{Belin:2019jqz}.

\subsection{Numerical Results}
Let us finally quote some purely experimental results for the dimension of slow growing forms about $q^ay^b$, $a>0$ for small index $m$. To numerically find the dimension of $\Js^{a,b}_m$, we computed  $f_{a,b}(n,l)$ numerically for some values of $n,l$ and checked its growth behavior. These computations become expensive very quickly because the Fourier-Jacobi coefficients in the sum of $f_{a,b}(n,l)$ are of very high order. We therefore had  to restrict to fairly small values of $n$ and $l$. However, we still believe that for the values given below, our determination of fast vs.\ slow growth is fairly reliable: When the $f_{a,b}(n,l)$ are not bounded, then already very small values of $n$ and $l$ give large numbers. When they are bounded, $f(n,l)$ is zero or very small for all $n$ and $l$ we tested.  We put our conjectured findings in table~\ref{table:qaybTable}. In each case, the affine space $\hat\Js^{a,b}_m$ is nonempty.

\begin{table}[H]
\centering
	\begin{tabular}{ c | c | c }
		m & term,polarity & dim $\Js^{a,b}_m$ \\ 
		\hline
		5 & $q^1y^5$,5 &  1 \\
		6 & $q^1y^5$,1 &  1 \\
		7 & $q^1y^6$,8 &  1 \\
		8 & $q^1y^6$,4 &  2 \\
		9 & $q^2y^9$,9 &  2 \\
		10 & $q^1y^7$,9 & 2 \\
		11 & $q^1y^7$,5 &  1\\
		11 & $q^2y^{10}$,12 &  2 \\
		12 & $q^2y^{10}$,4 & 2 \\
		12 & $q^1y^8$,16 & 2
	\end{tabular}
\caption{Conjectured dimension of slow growing forms, based on numerical analysis.}
\label{table:qaybTable}
\end{table}

We highlight some interesting observations. Index 7, 11, and 12 have wJfs slow growing around terms $q^a y^b$ of polarity larger than the index $m$. This is in contrast with the $q^0y^b$ case, where $\Js^{0,b}_m$ is zero for any $y^b$ with polarity larger than $m$. Additionally, we find wJfs with index 9 and 10 that are slow growing around $y^3$ but fast growing about $q^2y^9$ and $q^1 y^7$, respectively, of the same polarity. Similarly, for index 12, we find a wJf that is slow growing about $q^1y^8$, which has polarity 16, but fast growing around $y^4$, also of polarity 16. Thus, in contrast to the specific example of index 6 discussed in corollary \ref{cor:Index6SameSpaces}, it is not necessarily the case that fast (resp. slow) growth about one term implies fast (resp. slow) growth about another term of the same polarity.

\bibliographystyle{alpha}
 \bibliography{./main}

\newcommand{\etalchar}[1]{$^{#1}$}
\def\cprime{$'$}
\begin{thebibliography}{GGK{\etalchar{+}}08}

\bibitem[BBC{\etalchar{+}}20]{Belin:2020nmp}
Alexandre Belin, Nathan Benjamin, Alejandra Castro, Sarah~M. Harrison, and
  Christoph~A. Keller.
\newblock {$\mathcal{N}=2$ Minimal Models: A Holographic Needle in a Symmetric
  Orbifold Haystack}.
\newblock {\em SciPost Phys.}, 8(6):084, 2020.

\bibitem[BCGK18]{Belin:2018oza}
Alexandre Belin, Alejandra Castro, Joao Gomes, and Christoph~A. Keller.
\newblock {Siegel paramodular forms and sparseness in AdS$_{3}$/CFT$_{2}$}.
\newblock {\em JHEP}, 11:037, 2018.

\bibitem[BCKM19]{Belin:2019jqz}
Alexandre Belin, Alejandra Castro, Christoph~A. Keller, and Beatrix~J.
  Mühlmann.
\newblock {Siegel Paramodular Forms from Exponential Lifts: Slow versus Fast
  Growth}.
\newblock 10 2019.

\bibitem[BCKM20]{Belin:2019rba}
Alexandre Belin, Alejandra Castro, Christoph~A. Keller, and Beatrix Mühlmann.
\newblock {The Holographic Landscape of Symmetric Product Orbifolds}.
\newblock {\em JHEP}, 01:111, 2020.

\bibitem[DMZ12]{Dabholkar:2012nd}
Atish Dabholkar, Sameer Murthy, and Don Zagier.
\newblock {Quantum Black Holes, Wall Crossing, and Mock Modular Forms}.
\newblock 2012.

\bibitem[EZ85]{MR781735}
Martin Eichler and Don Zagier.
\newblock {\em The theory of {J}acobi forms}, volume~55 of {\em Progress in
  Mathematics}.
\newblock Birkh\"auser Boston, Inc., Boston, MA, 1985.

\bibitem[GGK{\etalchar{+}}08]{Gaberdiel:2008xb}
Matthias~R. Gaberdiel, Sergei Gukov, Christoph~A. Keller, Gregory~W. Moore, and
  Hirosi Ooguri.
\newblock {Extremal N=(2,2) 2D Conformal Field Theories and Constraints of
  Modularity}.
\newblock {\em Commun.Num.Theor.Phys.}, 2:743--801, 2008.

\bibitem[GN98]{MR1616929}
Valeri~A. Gritsenko and Viacheslav~V. Nikulin.
\newblock Automorphic forms and {L}orentzian {K}ac-{M}oody algebras. {II}.
\newblock {\em Internat. J. Math.}, 9(2):201--275, 1998.

\bibitem[Gri99]{Gritsenko:1999fk}
V.~Gritsenko.
\newblock {Elliptic genus of Calabi-Yau manifolds and Jacobi and Siegel modular
  forms}.
\newblock {\em Algebra i Analiz}, 11(5):100--125, 1999.

\bibitem[HR00]{MR2280879}
G.~H. Hardy and S.~Ramanujan.
\newblock Asymptotic formul\ae\ in combinatory analysis [{P}roc. {L}ondon
  {M}ath. {S}oc. (2) {\bf 17} (1918), 75--115].
\newblock In {\em Collected papers of {S}rinivasa {R}amanujan}, pages 276--309.
  AMS Chelsea Publ., Providence, RI, 2000.

\bibitem[K{\"o}h11]{MR2766155}
G{\"u}nter K{\"o}hler.
\newblock {\em Eta products and theta series identities}.
\newblock Springer Monographs in Mathematics. Springer, Heidelberg, 2011.

\bibitem[Man08]{Manschot:2008zb}
Jan Manschot.
\newblock {On the space of elliptic genera}.
\newblock {\em Commun. Num. Theor. Phys.}, 2:803--833, 2008.

\bibitem[Sen11]{Sen2011a}
Ashoke Sen.
\newblock {Negative discriminant states in N=4 supersymmetric string theories}.
\newblock {\em JHEP}, 10:073, 2011.

\bibitem[SV96]{Strominger:1996sh}
Andrew Strominger and Cumrun Vafa.
\newblock {Microscopic origin of the Bekenstein-Hawking entropy}.
\newblock {\em Phys.Lett.}, B379:99--104, 1996.

\end{thebibliography}

\end{document}